\documentclass[12pt,letterpaper,notitlepage]{amsart}

\usepackage{fullpage}

% for commutative diagrams:

\usepackage[all]{xy}

% for begin/end comment blocks:

\usepackage{verbatim}

% ams macros:

\usepackage{amssymb}
\usepackage{amsmath}
\usepackage{amsthm}
\usepackage{amsfonts}

% for subfigures:

\usepackage{float}

\usepackage[small]{caption}
\setlength{\captionmargin}{\parindent}
\usepackage[lofdepth,lotdepth,captionskip=10pt]{subfig}

% for drawings in pstricks:

\usepackage{pstricks}
\usepackage{epsfig}
\usepackage{pst-node}
\usepackage[dvips={-h tir_____.pfb}]{auto-pst-pdf}

% for customizing enumerated lists

\usepackage{enumitem}

% this command sets the table of contents to not print subsections

\setcounter{tocdepth}{1}

% for reference cross-linking in pdf
% linktocpage=true makes it so that the hyperref link from the TOC is only the page number, and not the whole title

\usepackage[linktocpage=true]{hyperref}

% for ams reference macros
% (this has to be loaded after hyperref)

\usepackage[alphabetic,msc-links,nobysame,lite]{amsrefs}

% %%%%%%%%%%%%%%%%%%%%%%%%%%%%%%
% begin my standard math command macros
% %%%%%%%%%%%%%%%%%%%%%%%%%%%%%%

\newcommand{\inv}{^{-1}}

\newcommand{\bbC}{\mathbb{C}}
\newcommand{\bbD}{\mathbb{D}}

\newcommand{\bbH}{\mathbb{H}}

\newcommand{\bbP}{\mathbb{P}}
\newcommand{\bbQ}{\mathbb{Q}}
\newcommand{\bbR}{\mathbb{R}}
\newcommand{\bbS}{\mathbb{S}}
\newcommand{\bbT}{\mathbb{T}}

\newcommand{\bbV}{\mathbb{V}}

\newcommand{\bbZ}{\mathbb{Z}}

\newcommand{\calD}{\mathcal{D}}

\newcommand{\calP}{\mathcal{P}}
\newcommand{\calQ}{\mathcal{Q}}

% %%%%%%%%%%%%%%%%%%%%%%%%%%%%%%
% end my standard math command macros
% %%%%%%%%%%%%%%%%%%%%%%%%%%%%%%

% %%%%%%%%%%%%%%%%%%%%%%%%%%%%%%
% begin my standard math theorem environments
% %%%%%%%%%%%%%%%%%%%%%%%%%%%%%%

\theoremstyle{plain}

\newtheorem{lemma}{Lemma}[section]
\newtheorem*{lemma*}{Lemma}

\newtheorem*{claim*}{Claim}

\newtheorem*{conjecture*}{Conjecture}

\newtheorem*{corollary*}{Corollary}

\newtheorem*{fact*}{Fact}

\newtheorem*{facts*}{Facts}

\newtheorem*{observation*}{Observation}
\newtheorem{proposition}[lemma]{Proposition}
\newtheorem*{proposition*}{Proposition}

\newtheorem*{question*}{Question}

\newtheorem*{theorem*}{Theorem}

\theoremstyle{definition}

\newtheorem*{definition*}{Definition}

\newtheorem*{example*}{Example}

\newtheorem*{remark*}{Remark}

\newtheorem*{remarks*}{Remarks}

% %%%%%%%%%%%%%%%%%%%%%%%%%%%%%%
% end my standard math theorem environments
% %%%%%%%%%%%%%%%%%%%%%%%%%%%%%%

% %%%%%%%%%%%%%%%%%%%%%%%%%%%%%%
% begin andrey's custom macros/environments for this document
% %%%%%%%%%%%%%%%%%%%%%%%%%%%%%%

% the symbol for the figure showing the possible configurations of the respective disk

%blank environments for custom numbering

% named theorems and lemmas:

\newtheorem{dut}[lemma]{Discrete Uniformization Theorem}

\newtheorem{tarski}[lemma]{Tarski's Theorem}

% main theorems

\newtheorem{maintheorem}{Theorem}

\newtheorem*{mainindex*}{Main Index Theorem (weak form)}

% cases

\theoremstyle{definition}

% %%%%%%%%%%%%%%%%%%%%%%%%%%%%%%
% end andrey's custom macros/environments for this document
% %%%%%%%%%%%%%%%%%%%%%%%%%%%%%%

% %%%%%%%%%%%%%%%%%%%%%%%%%%%%%%
% end juan's custom macros/environments for this document
% %%%%%%%%%%%%%%%%%%%%%%%%%%%%%%

%[section]		

\newtheorem*{defi*}{Definition}			\newtheorem*{bei*}{Example}
\newtheorem*{sat*}{Theorem}				\newtheorem*{kor*}{Corollary}
\newtheorem*{rmk*}{Remark}					

% number equations by section:

\let\ssection=\section
\renewcommand{\section}{\setcounter{equation}{0}\ssection}

\newtheorem*{namedtheorem}{\theoremname}
\newcommand{\theoremname}{testing}

\theoremstyle{remark}

\newcommand{\BC}{\mathbb C}			\newcommand{\BH}{\mathbb H}
			
			\newcommand{\BQ}{\mathbb Q}

%\newcommand{\RP}{\mathrm P}

		%\newcommand{\calD}{\mathcal D}   andrey has this macro

		%	Aeussere Automorphismen einer Gruppe
	%	Diffeomorphimen einer Mf
		%	Spezielle lineare Gruppe
\DeclareMathOperator{\PSL}{PSL}		%	Spezielle lineare Gruppe
		%	Allgemeine lineare Gruppe
		%	Identit\"at
\DeclareMathOperator{\Isom}{Isom}	%	Isometrien einer Mf
\DeclareMathOperator{\Hom}{Hom}		%	Homomorphismen
		%	Volumen

\DeclareMathOperator{\Gal}{Gal}
\renewcommand{\Im}{\operatorname{Im}}
\renewcommand{\Re}{\operatorname{Re}}

% %%%%%%%%%%%%%%%%%%%%%%%%%%%%%%
% end juan's custom macros/environments for this document
% %%%%%%%%%%%%%%%%%%%%%%%%%%%%%%

\begin{document}

\title{Three techniques for obtaining algebraic circle packings}
\author{Larsen Louder,\\Andrey Mishchenko,\\Juan Souto}
\date{\today}
\thanks{The second author has been partially supported by NSF grants DMS-0456940, DMS-0555750, DMS-0801029, DMS-1101373.  The third author has been partially supported by NSF grant DMS-0706878 and the Alfred P. Sloan Foundation.  MSC2010 subject classification: primary 52C26, secondary 12L12, 03C60}
\begin{abstract}
The main purpose of this article is to demonstrate three techniques for proving algebraicity statements about circle packings.  We give proofs of three related theorems: (1) that every finite simple planar graph is the contact graph of a circle packing on $\hat\bbC$, equivalently in $\bbC$, all of whose tangency points, centers, and radii are algebraic, (2) that every flat conformal torus which admits a circle packing whose contact graph triangulates the torus has algebraic modulus, and (3) that if $R$ is a compact Riemann surface of genus at least 2, having constant curvature $-1$, which admits a circle packing whose contact graph triangulates $R$, then $R$ is isomorphic to the quotient of $\bbH^2$ by a subgroup of $\PSL_2(\bbR \cap \bar\bbQ)$.  The statement (1) is original, while (2) and (3) have been previously proved in \cite{mccaughan-thesis}*{Chapters 8, 9} the Ph.D.\ thesis of McCaughan.

Our first proof technique is to apply Tarski's Theorem, a result from model theory, which says that if an elementary statement in the theory of real-closed fields is true over one real-closed field, then it is true over any real closed field.  This technique works to prove (1) and (2).  Our second proof technique is via an algebraicity result of Thurston on finite co-volume discrete subgroups of $\PSL_2\bbC\subset \Isom\bbH^3$.  This technique works to prove (1).  Our first and second techniques had not previously been applied in this area.  Our third and final technique is via a lemma in real algebraic geometry, and was previously used by McCaughan to prove (2) and (3).  We show that in fact it may be used to prove (1) as well.
\end{abstract}
\maketitle

\tableofcontents

\section{Introduction}

A \emph{circle packing}  in $\hat\bbC$ is defined to be a finite collection of pairwise interiorwise disjoint metric closed disks in the Riemann sphere $\hat\bbC=\bbC\cup\{\infty\}$ equipped with the constant curvature $+1$ metric as usual.  There are no conditions on the radii of the disks.  We say that two closed disks are \emph{tangent} if their boundary circles are tangent.  The \emph{contact graph} of a circle packing $\calP$ is the graph $G$ whose vertex set is in bijection with the disks of $\calP$, so that two vertices share an edge if and only if the corresponding disks are tangent.

Recall that a graph is called \emph{planar} if it can be homeomorphically embedded in the plane.  The contact graph $G$ of a circle packing in $\hat\bbC$ is always planar, because we may draw the vertices of $G$ at the metric centers of the disks they correspond to, and connect adjacent vertices with geodesic arcs.  It turns out that conversely, any planar graph $G$ can be realized by a circle packing $\calP$ in $\hat\bbC$.  Furthermore, if $G$ is the 1-skeleton of a triangulation of the 2-sphere $\bbS^2$, then this $\calP$ unique, up to action by M\"obius and anti-M\"obius transformations on $\hat\bbC$.  These results together constitute what is probably the most important theorem in circle packing, known as the Koebe \cite{koebe-1936}--Andreev \cite{MR0273510}--Thurston\footnote{Originally presented at his talk at the International Congress of Mathematicians, Helsinki, 1978, according to \cite{MR1303402}*{p.\ 135}.  See also \cite{thurston-gt3m-notes}*{Chapter 13}.} theorem.  A major motivating factor for the study of circle packings is that they are, in a certain precise sense, a discrete approximation to the Riemann mapping, as originally conjectured by Thurston\footnote{In his address at the International Symposium in Celebration of the Proof of the Bieberbach Conjecture, Purdue University, March 1985, according to \cite{MR1207210}*{p.\ 371}.} and proved by Rodin and Sullivan in \cite{MR906396}.  The close relationship between circle packing and e.g.\ classical complex analysis has been further confirmed many times, for example see \cite{MR1207210}.  For further exposition and references on circle packing, see for example the articles \citelist{\cite{MR1303402} \cite{MR2884870}} and their bibliographies, or the book \cite{MR2131318} by Stephenson.\medskip

In this article we present some tools for uniformizing circle packings to make the centers and radii of their disks algebraic.  Precisely, a circle packing $\calP$ in $\hat\bbC$ is called \emph{algebraic} if the following are all algebraic:

\begin{itemize}
\item all tangency points between pairs of disks in $\calP$,
\item all centers of disks in $\calP$, and
\item all Euclidean radii of disks in $\calP$ under stereographic projection.
\end{itemize}

\noindent There are two equivalent definitions of algebraicity of a point $z\in \hat\bbC$.  First, we may say that $z\in \hat\bbC = \bbC \cup \{\infty\}$ is algebraic if and only if either the image of $z$ under standard stereographic projection to $\bbC$ is algebraic, or $z=\infty$.  Second, we may say that $z\in \hat\bbC = \bbP^1(\bbC)$ is algebraic if and only if $z\in \bbP^1(\bar\bbQ) \subset \bbP^1(\bbC)$.

The first main theorem of this article is:

\begin{maintheorem}
\label{main1}
\label{every g admits an algebraic packing}
Let $G$ be a finite, simple, planar graph.  Then there exists an algebraic circle packing in $\hat\bbC$ having contact graph $G$.
\end{maintheorem}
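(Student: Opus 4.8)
The plan is to encode ``$G$ is the contact graph of an algebraic circle packing'' as an existential first-order sentence over ordered fields, to observe that it holds over $\bbR$, and then to conclude that it holds over the field $\bbR\cap\bar\bbQ$ of real algebraic numbers by invoking Tarski's Theorem. The elegance of this route is that one never has to keep track of algebraicity through any geometric construction.

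First I would write down the sentence. Introduce, for each vertex $v\in V(G)$, real variables $x_v,y_v,r_v$, to be interpreted as the center $c_v=x_v+\mathrm{i}y_v\in\bbC$ and the Euclidean radius $r_v$ of a round disk $D_v\subset\bbC$. Setting $P_{uv}:=(x_u-x_v)^2+(y_u-y_v)^2-(r_u+r_v)^2$, let $\Psi_G$ be the sentence
\[
\Psi_G:\quad \exists\,(x_v,y_v,r_v)_{v\in V(G)}\ \Bigl[\ \bigwedge_{v\in V(G)} r_v>0\ \wedge\ \bigwedge_{\{u,v\}\in E(G)} P_{uv}=0\ \wedge\ \bigwedge_{\substack{u\ne v\\ \{u,v\}\notin E(G)}} P_{uv}>0\ \Bigr].
\]
For round disks in $\bbC$, having pairwise disjoint interiors is equivalent to $|c_u-c_v|\ge r_u+r_v$ for all $u\ne v$, and among disjoint-interior round disks two are tangent precisely when $|c_u-c_v|=r_u+r_v$ (internal tangency being impossible once the interiors are disjoint and the radii positive). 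Hence a real solution of the bracketed formula is exactly a family of round disks in $\bbC$ with pairwise disjoint interiors whose contact graph is $G$ --- the strict inequalities over the non-edges being precisely what pins the contact graph down to $G$ itself rather than to a mere supergraph. For each edge $\{u,v\}$ the tangency point of $D_u$ and $D_v$ is then $(r_vc_u+r_uc_v)/(r_u+r_v)$, a rational function of the variables with nonvanishing denominator.

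Next I would check that $\Psi_G$ holds over $\bbR$. By the existence half of the Koebe--Andreev--Thurston Theorem recalled in the introduction, $G$ is the contact graph of some circle packing $\calQ$ in $\hat\bbC$; since the complement of the finitely many disks of $\calQ$ is a nonempty open set, a M\"obius transformation moves $\infty$ off all of them, and stereographic projection then realizes $\calQ$ as a family of round Euclidean disks in $\bbC$ whose centers and radii satisfy the bracketed formula. (One may instead avoid the introduction's assertion altogether: embed $G$ in $\bbS^2$, enlarge it to a triangulation $\calT$ of $\bbS^2$ having $G$ as an \emph{induced} subgraph of its $1$-skeleton --- a routine combinatorial step --- apply the Koebe--Andreev--Thurston Theorem to $\calT$, and keep only the disks indexed by $V(G)$.) Now $\Psi_G$ is a first-order sentence in the language of ordered fields: it involves only polynomial equalities and strict inequalities, with no transcendental data such as angles. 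Since $\bbR$ and $\bbR\cap\bar\bbQ$ are both real-closed, Tarski's Theorem gives that $\Psi_G$, being true over $\bbR$, is true over $\bbR\cap\bar\bbQ$. A witness furnishes algebraic centers $c_v\in\bar\bbQ$ and algebraic radii $r_v\in\bar\bbQ$, and hence an algebraic circle packing $\calP$ in $\bbC\subset\hat\bbC$ with contact graph $G$, its tangency points being the algebraic numbers $(r_vc_u+r_uc_v)/(r_u+r_v)$. This is the desired conclusion.

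I expect essentially all of the content to lie in the first step: recognizing that the \emph{entire} assertion --- crucially including that the contact graph is exactly $G$, not just some supergraph --- is expressible existentially over ordered fields, and arranging by a normalizing M\"obius transformation that the packing lives in $\bbC$ so that the round-disk equations are meaningful there. Once the formula is in hand, the appeal to Tarski's Theorem is immediate. (The paper also develops two further routes to this same theorem: via Thurston's algebraicity theorem for finite co-volume discrete subgroups of $\PSL_2\bbC\subset\Isom\bbH^3$, and via a lemma in real algebraic geometry.)
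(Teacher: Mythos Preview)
Your proposal is correct and follows essentially the same approach as the paper's first proof: the sentence $\Psi_G$ is exactly the sentence $S$ constructed there, and the transfer from $\bbR$ to $\bbR\cap\bar\bbQ$ via Tarski's Theorem is identical. The extra details you supply---the explicit M\"obius normalization moving $\infty$ off the disks, and the formula $(r_vc_u+r_uc_v)/(r_u+r_v)$ for the tangency points---are welcome clarifications but do not alter the route.
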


\noindent Recall that a graph is called \emph{simple} if it is undirected, does not have loops, and has no repeated edges.  A \emph{loop} is an edge from a vertex to itself.  To the best of our knowledge, Theorem \ref{every g admits an algebraic packing} has never appeared in print.\bigskip

Next, we consider circle packings on more general Riemann surfaces.  A \emph{circle packing} on a Riemann surface $R$ is a collection of pairwise interiorwise disjoint metric closed disks on $R$.  The following is well-known, see \cite{MR1207210}:

\begin{dut}
\label{dut}
Let $X$ be a triangulation of a compact oriented topological surface without boundary.  Then there exists a constant curvature closed Riemann surface $R$ homeomorphic to $X$, and a locally finite circle packing $\calP$ on $R$, so that $\calP$ realizes $X$.  Furthermore $R$ is uniquely determined by $X$ up to action by conformal isomorphisms on $R$, and $\calP$ is then uniquely determined by $X$ and $R$ up to action by conformal and anti-conformal automorphisms of $R$.
\end{dut}

\noindent A \emph{triangulation}\label{triangulations} $X = (V,E,F)$ of a surface $S$ is a collection of triangular faces $F$, having vertices $V$ and edges $E$, with identifications along the edges so that the resulting object is homeomorphic to $S$.  We require that two distinct faces of $X$ meet along a single edge, or at a single vertex, or not at all, and that there are no vertex nor edge identifications along the boundary of any one triangle.  Then the 1-skeleton of $X$ is the graph $(V,E)$.  We consider triangulations only up to their combinatorics.  We say that the circle packing $\calP$ \emph{realizes} $X$ if the contact graph of $\calP$ is equal to the 1-skeleton of $X$.\medskip

Call a Riemann surface $R$ which comes from some triangulation $X$ via Theorem \ref{dut} \emph{circle packable}.  In other words, we say that $R$ is \emph{circle packable} if there exists a locally finite circle packing $\calP$ on $R$ realizing some triangulation of $R$.  Then Theorem \ref{dut} implies that there are many $R$ which fail to be circle packable: this is because for any $g\ge 1$ the moduli space of genus $g$ Riemann surfaces is uncountable, but there are only countably many triangulations, considered up to combinatorial equivalence, of a genus $g$ surface.  It is then natural to ask which compact Riemann surfaces are circle packable.

In \cite{MR860677}, Brooks proves that the circle packable compact Riemann surfaces $R$ of given genus are dense in their moduli space.  The next two theorems we state here, originally due to \cite{mccaughan-thesis}*{Chapters 8, 9}, give further information in the direction of an answer to this question:

\begin{maintheorem}
\label{main2}
\label{circle packable tori have algebraic modulus}
Suppose that $\bbT$ is a flat circle packable torus.  Then $\bbT$ is similar or isometric to $\bbC / \left<1,\tau\right>$, with $\tau$ some algebraic number in the upper half-plane.
\end{maintheorem}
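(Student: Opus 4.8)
The plan is to show that the defining equations of a circle packing realizing a fixed triangulation on a flat torus $\bbC/\langle 1,\tau\rangle$ constitute an elementary statement in the theory of real-closed fields, and to transfer a solution over $\bbR$ down to the real-closed field $\bbR\cap\bar\bbQ$ of real algebraic numbers by Tarski's Theorem. To set this up, first I would reduce to a fixed triangulation: since $\bbT$ is circle packable, fix a triangulation $X$ of the torus and a circle packing on $\bbT$ realizing $X$. By the Discrete Uniformization Theorem (Theorem~\ref{dut}), the flat torus carrying a circle packing that realizes $X$ is unique up to conformal isomorphism, and $\bbT$ is one such torus. Hence it suffices to exhibit \emph{some} flat torus $\bbC/\langle 1,\tau\rangle$ with $\tau$ algebraic and $\Im\tau>0$ carrying a circle packing realizing $X$: Theorem~\ref{dut} will then force $\bbC/\langle 1,\tau\rangle$ to be conformally isomorphic, hence similar, to $\bbT$, so $\bbT$ has algebraic modulus.

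The heart of the argument is to produce, from the combinatorics of $X$ alone, a formula $\Phi(a,b)$ in the first-order language of ordered fields with the property that a solution of $\Phi(a,b)$ over $\bbR$ encodes a circle packing realizing $X$ on $\bbC/\langle 1,a+bi\rangle$, and conversely such a packing yields a solution. I would build $\Phi$ from a developing-map description of a periodic packing in $\bbC$. Let $\tilde X$ be the triangulation of the plane covering $X$, with deck group $\pi_1(\bbT)=\bbZ^2=\langle A,B\rangle$, fix a base vertex $\tilde v_0$, and fix a finite connected subcomplex $W\subset\tilde X$ consisting of a fundamental region for $\bbZ^2$ together with the translates $A^{\pm1}W,B^{\pm1}W$ needed to see the holonomy of both generators. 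The existentially quantified variables of $\Phi$ are: radii $r_v>0$ for the vertices $v$ of $X$ (extended $\bbZ^2$-equivariantly over $\tilde X$); coordinates $(x_{\tilde v},y_{\tilde v})\in\bbR^2$ of a developed center for each vertex $\tilde v$ of $W$; and finitely many auxiliary variables used to pick out square roots by their signs. The quantifier-free body of $\Phi$ asserts (i) the tangency equations $(x_{\tilde u}-x_{\tilde v})^2+(y_{\tilde u}-y_{\tilde v})^2=(r_u+r_v)^2$ and their two cyclic variants for every face $\{\tilde u,\tilde v,\tilde w\}$ of $W$; (ii) orientation sign conditions, expressed through cross products of successive edge vectors, ensuring that each Euclidean triangle is developed with the correct orientation and that the fan of triangles around each interior vertex of $W$ winds exactly once; and (iii) the holonomy and equivariance equations, in particular $(x_{A\tilde v_0},y_{A\tilde v_0})=(x_{\tilde v_0},y_{\tilde v_0})+(1,0)$ and $(x_{B\tilde v_0},y_{B\tilde v_0})=(x_{\tilde v_0},y_{\tilde v_0})+(a,b)$, together with $b>0$. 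Each clause is a polynomial equation or inequality, so $\Phi$ is first-order over real-closed fields. The standard fact that a circle-packing metric all of whose cone angles equal $2\pi$ assembles into an honest circle packing shows that a solution of $\Phi$ over any real-closed field really does produce such a packing on $\bbC/\langle 1,a+bi\rangle$; and conversely, developing the packing furnished by Theorem~\ref{dut} produces a solution of $\Phi(a,b)$ over $\bbR$ with $a+bi$ the modulus of $\bbT$.

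It then remains to transfer. The sentence $\exists a\,\exists b\,\Phi(a,b)$ is an elementary statement in the theory of real-closed fields, and it holds over $\bbR$ by the previous paragraph; since $\bbR\cap\bar\bbQ$ is a real-closed field, Tarski's Theorem gives that it holds over $\bbR\cap\bar\bbQ$ as well. Pick a witness $(a_0,b_0)\in(\bbR\cap\bar\bbQ)^2$; as $\bbR\cap\bar\bbQ\subset\bbR$, the corresponding data also solve $\Phi(a_0,b_0)$ over $\bbR$, so $\bbC/\langle 1,\tau\rangle$ with $\tau=a_0+b_0i$ carries a circle packing realizing $X$. Since $\tau$ is algebraic with $\Im\tau>0$, the reduction above finishes the proof.

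The step I expect to be the main obstacle is writing $\Phi$ so that it is genuinely first-order while still capturing flatness. The naive requirement that the circle-packing metric determined by $(r_v)$ be flat is stated via vertex angle sums and so involves $\arccos$ and $\pi$, which are not available over an arbitrary real-closed field; passing to the developing map lets one replace ``the angle sum at every vertex equals $2\pi$'' by the conjunction of the tangency equations over a fundamental region together with the orientation and winding-number conditions in (ii). One must then verify both implications carefully: that the packing of Theorem~\ref{dut} satisfies these polynomial relations, and, conversely, that any solution over a real-closed field reassembles into an honest flat torus with an honest circle packing realizing $X$, so that the uniqueness clause of Theorem~\ref{dut} applies.
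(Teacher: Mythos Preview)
Your approach is correct and, like the paper's first proof, rests on Tarski's theorem; the overall architecture is the same. The meaningful difference is in how you invoke uniqueness. The paper builds enough normalization into its sentence $S$ (forcing $\tau$ into the fundamental region $V$ of the modular group, pinning the distinguished disk at the origin, and including membership predicates $K_i$ for four translated copies of the fundamental parallelogram) so that $S$ has a \emph{unique} real solution; Tarski then gives a solution over $\bbR\cap\bar\bbQ$, and uniqueness forces it to coincide with the original one, hence $\tau$ itself is algebraic. You instead allow $\Phi$ to have many real solutions, produce some algebraic witness $\tau_0$ via Tarski, and then appeal to the uniqueness clause of the Discrete Uniformization Theorem to conclude that $\bbC/\langle 1,\tau_0\rangle$ is conformally isomorphic to $\bbT$. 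This is a genuine simplification: you never need the fundamental region $V$ or the predicates $W$ and $K_i$, and the argument separates the transfer step cleanly from the rigidity step.

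The place where your encoding diverges more substantially is in certifying that a solution of $\Phi$ over $\bbR$ really yields an honest circle packing. The paper handles this bluntly, by working with four copies $P,\,P+1,\,P+\tau,\,P+1+\tau$ of the fundamental parallelogram and imposing explicit strict non-overlap predicates $F_i$ between every pair of non-adjacent disks among these $4n$ disks; this makes the verification that $\tilde\calD = \bigcup_\lambda(\tilde\calD_0+\lambda)$ is a packing essentially immediate. You replace these $F_i$ by orientation and winding-number clauses on the developed fan around each interior vertex of $W$, together with holonomy equations only at the base vertex. This can be made to work, but you are right to flag it as the delicate step: one must check that positive orientation of each face plus the tangency equations at an interior vertex actually forces the cone angle to be $2\pi$ rather than $2\pi k$, and that the single holonomy constraint at $\tilde v_0$ propagates through the tangency equations to give periodicity of all centers. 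The paper's cruder $F_i$ predicates trade elegance for an easier verification of this point.
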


\noindent In other words, any flat circle packable torus has algebraic modulus.  We also show that if $\calP$ is a circle packing in $\bbC / \left<1,\tau\right>$ with $\tau$ in the upper half-plane, then the disks of $\calP$ have algebraic radii, and furthermore $\calP$ may be chosen so that all of its disks' centers and tangency points lift to algebraic numbers in $\bbC$.  Next:

\begin{maintheorem}
\label{main3}
\label{circle packable compact surfaces are algebraic}
Suppose that $R$ is a compact circle packable Riemann surface without boundary of genus at least 2, and of constant curvature $-1$.  Then $R$ is isometric to the quotient of $\bbH^2$ by a subgroup of $\PSL_2(\bbR \cap \bar \bbQ)\subset \PSL_2\bbR = \Isom^+\bbH^2$.
\end{maintheorem}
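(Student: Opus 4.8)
The plan is to follow the representation-variety and rigidity strategy (in the spirit of McCaughan's third technique), extracting algebraicity from the uniqueness clause of the Discrete Uniformization Theorem by means of a lemma in real semialgebraic geometry. Since $R$ is circle packable, fix a triangulation $X$ of the genus-$g$ surface $S$ underlying $R$ together with a locally finite circle packing $\calP$ on $R$ realizing $X$; by the Discrete Uniformization Theorem the pair $(R,\calP)$ is determined by $X$ up to conformal and anticonformal automorphism. Passing to the universal cover $\widetilde R=\bbH^2$ produces a holonomy representation $\rho_0\colon\pi_1(S)\to\PSL_2\bbR\cong\SO^+(2,1)$ and a $\pi_1(S)$-equivariant assignment sending each vertex $v$ of the lifted triangulation $\widetilde X$ to the hyperbolic center $p_v\in\bbH^2$ and the radius $r_v>0$ of the corresponding lifted disk. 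Recording, in a chosen frame, the matrices of $\rho_0$ on a finite generating set of $\pi_1(S)$ together with the vectors $p_v$ and the numbers $a_v:=\cosh r_v$ for one representative $v$ in each of the finitely many $\pi_1(S)$-orbits of vertices, one obtains a real tuple $x_0$. It suffices to show that some tuple in the $\SO^+(2,1)$-orbit of $x_0$ has algebraic coordinates: then $R\cong\bbH^2/\Gamma'$ with $\Gamma'$ generated by the (then algebraic) matrices $\rho_0'(\gamma_i)$, so $\Gamma'\le\PSL_2(\bbR\cap\bar\bbQ)$.

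Next, set up the semialgebraic model. Two disks of hyperbolic radii $r_u,r_w$ with centers $p_u,p_w\in\bbH^2\subset\bbR^{2,1}$ are externally tangent precisely when $-\langle p_u,p_w\rangle=\cosh(r_u+r_w)$, which upon squaring away a (near $x_0$ positive) square root becomes the polynomial relation $\bigl(-\langle p_u,p_w\rangle-a_u a_w\bigr)^2=(a_u^2-1)(a_w^2-1)$ in $p_u,p_w,a_u,a_w$. I therefore define $\calW\subset\bbR^m$ to be the semialgebraic set, defined over $\bbQ$, cut out by: membership of the generator matrices in $\SO^+(2,1)$; the defining relations of $\pi_1(S)$; the equations $\langle p_v,p_v\rangle=-1$; one tangency relation of the above form for each of the finitely many $\pi_1(S)$-orbits of edges of $\widetilde X$ (using equivariance to write the second center in the form $\rho(w)\,p_{v_l}$ for a suitable word $w$); and the strict inequalities asserting that, within each vertex star, the finitely many disks concerned have pairwise disjoint interiors. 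Then $x_0\in\calW$. Finally I normalize away the simultaneous conjugation action of $\SO^+(2,1)$ -- for instance by intersecting with the $\bbQ$-semialgebraic slice on which a fixed generator is in standard form, or by projecting out the conjugating element using Tarski--Seidenberg -- obtaining a $\bbQ$-semialgebraic set $\calW_0$ and a point $x_0'\in\calW_0$ in the $\SO^+(2,1)$-orbit of $x_0$.

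The heart of the matter is to show that $x_0'$ is an \emph{isolated} point of $\calW_0$. Let $x_1\in\calW_0$ be close to $x_0'$. At $x_0'$ the equivariant family of disks is a genuine circle packing of $\bbH^2$ with nerve $\widetilde X$; the conditions defining ``genuine circle packing realizing $X$'' -- pairwise disjoint interiors of all $\Gamma$-translates, tangency exactly along edges of $\widetilde X$, triangular interstices -- are open, and because the quotient is compact all but finitely many of them hold automatically for data near $x_0'$, so for $x_1$ sufficiently close the disks of $x_1$ also form a genuine circle packing realizing $X$; in particular $\rho_1$ is discrete and faithful and $\bbH^2/\rho_1(\pi_1(S))$ is a closed hyperbolic surface tiled by that packing. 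By the uniqueness clause of the Discrete Uniformization Theorem that surface, with that packing, is carried to $(R,\calP)$ by a conformal or anticonformal isomorphism, possibly composed with one of the finitely many combinatorial automorphisms of $X$; translating this back into the chosen frame puts $x_1$ in the union of finitely many $\SO^+(2,1)$-orbits through $x_0'$, hence, after the normalization, $x_1$ is one of finitely many points. As this holds on a whole neighborhood, $x_0'$ is isolated in $\calW_0$. Now I invoke the real-algebraic-geometry lemma: an isolated point of a semialgebraic set defined over $\bbQ$ has algebraic coordinates. (This follows from Tarski--Seidenberg together with the fact that connected components of $\bbQ$-semialgebraic sets are $\bbQ$-semialgebraic; it reduces to the observation that a one-point $\bbQ$-semialgebraic subset of $\bbR$ is cut out there by a nonzero polynomial over $\bbQ$, hence is algebraic.) Thus $x_0'$, and in particular each matrix $\rho_0'(\gamma_i)$, has algebraic entries, giving $R\cong\bbH^2/\Gamma'$ with $\Gamma'=\rho_0'(\pi_1(S))\le\PSL_2(\bbR\cap\bar\bbQ)$; the same data simultaneously exhibit the centers and tangency points of the packing as algebraic.

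I expect the isolation step to be the main obstacle: one must carefully weigh the openness of the ``genuine hyperbolic circle packing realizing $X$'' conditions against the uniqueness statement of the Discrete Uniformization Theorem, while keeping honest bookkeeping of the $\PSL_2\bbR$-conjugation normalization and of the finite ambiguity coming from (anti)conformal automorphisms of $R$ and combinatorial symmetries of $X$. One could instead hope to apply Tarski's Theorem directly, as in the first technique, but discreteness and faithfulness of the holonomy are not first-order conditions over $\bbR$, and it is precisely this that the rigidity argument above is arranged to circumvent.
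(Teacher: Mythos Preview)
Your proposal is correct and follows essentially the same approach as the paper: encode the normalized lifted packing together with generators of the deck group as a point of a real (semi)algebraic set defined over $\bbQ$, argue that this point is isolated via the uniqueness clause of the Discrete Uniformization Theorem, and conclude by the lemma that isolated points have algebraic coordinates. The paper's implementation differs only in cosmetic choices---it works in the Poincar\'e disk model with radius variable $R_i=e^{r_i}$ rather than the hyperboloid model with $a_v=\cosh r_v$, normalizes by pinning two disk centers rather than by slicing the $\SO^+(2,1)$-conjugation action, and sets up a genuine algebraic variety (equalities only) rather than a semialgebraic set, so that the algebraicity lemma it invokes is the variety version (Lemma~\ref{mccaughan}) rather than your semialgebraic variant.
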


\noindent Here $\bbH^2$ denotes the hyperbolic plane, and $\Isom^+\bbH^2$ the group of orientation-preserving isometries of $\bbH^2$.  We also show that then we may take the (hyperbolic) centers of the circles to be algebraic, and also that each circle has a hyperbolic radius which is the logarithm of an algebraic number.\medskip

Beyond these results, and the denseness theorem mentioned above, not much is known about which compact Riemann surfaces are circle packable.  The analogous question in the open surface case has been answered completely by Williams in \cite{MR2001072}, where he proves that every open complete constant curvature Riemann surface $R$ admits a locally finite circle packing having contact graph triangulating $R$.  More recently, Kojima, Mizushima, Tan, and others have studied the closely related question of which projective structures on surfaces are packable.  An excellent recent survey of results and open problems in that area is \cite{MR2258757}.  Another closely related question is the following: if $M$ is a complete orientable finite-volume non-compact hyperbolic 3-manifold, then each of its cusps is isometric to $\bbT \times [0,\infty)$, where $\bbT$ is some flat torus.  Then one may ask which flat tori appear in this way.  It is known that only countably many do.  In \cite{MR1316178}, Nimershiem proves that every circle packable flat torus arises as the cusp of some $M$ as above, and gives an explicit construction.\footnote{Thanks to Alan Reid for referring us to the article \cite{MR1316178}.}\bigskip

We now move on to our methods of proof for Theorems \ref{main1}, \ref{main2}, \ref{main3}.  We have three different methods.  Each of the next three sections of this article addresses a different one of these three methods of proof.  Each approach works to prove a subset of Theorems \ref{main1}, \ref{main2}, \ref{main3}, but none seems to work to prove all three.  In each case, we include a discussion of the obstructions to applying a given method to those of our three main theorems which it does not handle.\medskip

The first method, which will prove Theorems \ref{main1} and \ref{main2}, is via a result from model theory:

\begin{tarski}
\label{tarski}
Suppose that $S$ is a first-order sentence in the theory of real-closed fields.  If $S$ is true in one real-closed field, then it is true in every real-closed field.
\end{tarski}

\noindent A \emph{real-closed field} is an ordered field $F$, so that any positive element of $F$ has a square root in $F$, and so that any odd-degree polynomial over $F$ has a root in $F$.  The only two examples we care about here are the real numbers $\bbR$ and the real algebraic numbers $\bbR\cap \bar\bbQ$.  A \emph{first-order sentence in the theory of real-closed fields} is, roughly speaking, a finite logical statement, in which we may use the usual logical connectives and quantifiers ($\Rightarrow$, $\Leftrightarrow$, $\neg$, $\vee$, $\wedge$, $\forall$, $\exists$), as well as the symbols from the theory of real-closed fields ($1$, $0$, $\times$, $+$, $=$, $>$), so that all variables in the statement are appropriately quantified.

Our plan is to translate the statements of Theorems \ref{main1} and \ref{main2} into appropriately constructed first-order sentences which we know to be true over $\bbR$ by other means, and then to apply Tarski's theorem \ref{tarski} to obtain the truth of these sentences over $\bbR\cap \bar\bbQ$.  Theorem \ref{main1} follows almost immediately, but the situation for Theorem \ref{main2} is more subtle.  This proof technique is covered in detail in Section \ref{tarski's section}.  Swan provides a nice expository article on Tarski's theorem \ref{tarski}, including a proof, in \cite{swan-tarski}, and cites the book \cite{MR0219380} of Kreisel and Krivine as his source.  The proof of Tarski's theorem is by elimination of quantifiers.\bigskip

Our second technique works only to prove Theorem \ref{main1}.  It proceeds via the following algebraicity result of Thurston in the flavor of the Mostow--Prasad rigidity theorem, found in \cite{thurston-gt3m-notes}*{Proposition 6.7.4}:

\begin{proposition}
\label{thurston algebraicity prop}
If $\Gamma$ is a discrete subgroup of $\PSL_2\BC$ such that $\BH^3/\Gamma$ has finite volume, then $\Gamma$ is conjugate to a group of matrices whose entries are algebraic.
\end{proposition}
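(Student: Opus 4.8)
The plan is to reduce the statement to the assertion that the squared traces $\tr\rho_0(\gamma)^2$ of the elements of $\Gamma$, under the given embedding $\rho_0\colon\Gamma\hookrightarrow\PSL_2\BC$, are algebraic numbers, and to extract this from Mostow--Prasad rigidity together with the fact that the character variety of $\Gamma$ is defined over $\bbQ$.

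First I would record that $\Gamma$ is finitely presented, since a finite-covolume lattice in $\PSL_2\BC$ is geometrically finite and hence finitely presented; fix a presentation $\Gamma=\langle g_1,\dots,g_n\mid r_1,\dots,r_m\rangle$. The representation variety $\Hom(\Gamma,\PSL_2\BC)\subseteq(\PSL_2\BC)^n$, cut out by the equations $r_j(X_1,\dots,X_n)=1$, is an affine variety defined over $\bbQ$ because $\PSL_2$ is an algebraic group over $\bbQ$ and the $r_j$ are words; its GIT quotient, the character variety $\calX(\Gamma)$, is then also defined over $\bbQ$, and each function $\rho\mapsto\tr\rho(\gamma)^2$ (well defined on $\PSL_2$, being invariant under sign) is a $\bbQ$-regular function on $\calX(\Gamma)$. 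Choose a nontrivial peripheral element $\mu_i$ in each cusp of $\BH^3/\Gamma$ (there are none if $\Gamma$ is cocompact) and let $\calX_0\subseteq\calX(\Gamma)$ be the $\bbQ$-subvariety on which every $\mu_i$ is parabolic, i.e.\ $\tr\rho(\mu_i)^2=4$.

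The key geometric input is that the class $[\rho_0]$ is an \emph{isolated} point of $\calX_0$. When $\Gamma$ is cocompact this is local Mostow rigidity, $H^1(\Gamma;\mathrm{Ad}\,\rho_0)=0$, which identifies the Zariski tangent space to $\calX(\Gamma)$ at $[\rho_0]$ with zero. When $\BH^3/\Gamma$ has cusps one instead uses Thurston's deformation theory for cusped hyperbolic $3$-manifolds: near $[\rho_0]$ the variety $\calX(\Gamma)$ is smooth of dimension equal to the number of cusps, and the parabolicity conditions at the cusps cut this dimension down transversally (``half lives, half dies''), so $[\rho_0]$ is again isolated, now in $\calX_0$. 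Because $\calX_0$ is defined over $\bbQ\subseteq\bar\bbQ$ and $\bar\bbQ$ is algebraically closed, the set $\calX_0(\bar\bbQ)$ is Zariski dense in $\calX_0(\BC)$; an isolated complex point of $\calX_0$ is its own Zariski closure and therefore must lie in $\calX_0(\bar\bbQ)$. Hence every $\bbQ$-regular function on $\calX_0$ is algebraic at $[\rho_0]$; in particular $\tr\rho_0(\gamma)^2\in\bar\bbQ$ for every $\gamma\in\Gamma$, and since $\bar\bbQ$ is closed under square roots the traces of both $\SL_2$-lifts of each $\rho_0(\gamma)$ are algebraic as well.

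Finally I would convert algebraic traces into algebraic matrix entries in the standard way. As $\Gamma$ is a lattice it is non-elementary, so one may pick $g,h\in\Gamma$ with $A=\rho_0(g)$ and $B=\rho_0(h)$ generating an irreducible subgroup; then $\{I,A,B,AB\}$ is a basis of $M_2(\BC)$. After conjugating $\rho_0$ in $\PSL_2\BC$, and fixing $\SL_2$-lifts, we may take $A$ diagonal and $B$ in a normal form whose entries are rational expressions in $\tr A,\tr B,\tr AB\in\bar\bbQ$, so that $A,B\in\PSL_2\bar\bbQ$; and for arbitrary $\gamma\in\Gamma$ the four entries of a lift of $\rho_0(\gamma)$ are recovered by linear algebra from the algebraic numbers $\tr(\rho_0(\gamma))$, $\tr(A\rho_0(\gamma))=\tr(\rho_0(g\gamma))$, $\tr(B\rho_0(\gamma))=\tr(\rho_0(h\gamma))$, $\tr(AB\rho_0(\gamma))=\tr(\rho_0(gh\gamma))$, using nondegeneracy of the trace pairing on the basis $\{I,A,B,AB\}$. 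Thus $\Gamma$ is conjugate into $\PSL_2\bar\bbQ$, a group of algebraic matrices, as claimed. I expect the main obstacle to be the isolation statement in the cusped case: in contrast to the cocompact case, $[\rho_0]$ is genuinely non-isolated in $\calX(\Gamma)$, and establishing isolation in $\calX_0$ requires the full strength of Thurston's hyperbolic Dehn surgery picture at the cusps; by comparison the passage from traces to entries is routine.
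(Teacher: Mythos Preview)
Your argument is correct and follows the same overall strategy as the paper's sketch, but you package the rigidity input differently. The paper works directly on the representation variety $\Hom(\Gamma,\PSL_2\BC)$: it observes that the Galois group $\Gal(\BC/\BQ)$ permutes the finitely many irreducible components, so each component is defined over $\bar\BQ$; it then picks an arbitrary $\bar\BQ$-point $\rho$ in the component containing $\rho_0$ and invokes Mostow--Prasad rigidity to conclude that $\rho$ and $\rho_0$ are conjugate (implicitly using Weil local rigidity to know that the whole component consists of conjugates of $\rho_0$). You instead pass to the character variety, use local rigidity explicitly to see that $[\rho_0]$ is itself an isolated, hence $\bar\BQ$-rational, point of $\calX_0$, and then carry out the standard trace-to-matrix-entries reduction via an irreducible pair $A,B$. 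The paper's route is shorter because it never leaves the representation variety and so sidesteps the trace-to-entries bookkeeping; your route makes the dependence on infinitesimal rigidity, and on the Thurston deformation picture at the cusps in the non-cocompact case, more explicit, which is arguably an advantage since that is precisely the step the paper's sketch leaves opaque.
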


\noindent We sketch a proof of Proposition \ref{thurston algebraicity prop} at the end of Section \ref{thurston's section}.

Our proof of Theorem \ref{main1} proceeds as follows.  We first complete our finite simple planar graph $G$ to the 1-skeleton of a triangulation $X$, and circle pack the resulting triangulation in $\hat\bbC$ by the Koebe--Andreev--Thurston theorem, denoting the resulting packing by $\calP$.  Then every connected component $U$ of $\hat\bbC \setminus \cup_{D\in \calP} D$ is a curvilinear triangle.  For such a $U$, let $D^*_U$ be the metric closed disk in $\hat\bbC$ containing $U$, whose boundary circle passes through the corners of $U$.  Then the collection $\calP^* = \{D^*_U\}$ of all such $D^*_U$ is called the \emph{dual circle packing} to $\calP$.  Let $\hat\Gamma$ be the group generated by reflections in the boundary circles of the disks of $\calP$ and $\calP^*$.  We argue that $\hat\Gamma$ is a discrete, finite covolume subgroup of $\Isom \bbH^3$.  It follows that the group $\Gamma$ consisting of orientation-preserving elements of $\hat\Gamma$ is also discrete and of finite covolume.  Then Proposition \ref{thurston algebraicity prop} allows us to assume without loss of generality that $\Gamma\subset \PSL_2\bar\bbQ$.  It follows easily that the tangency points of pairs of disks in $\calP$ are algebraic.  We obtain the algebraic circle packing whose contact graph is our original graph $G$ as a sub-packing of $\calP$, establishing Theorem \ref{main1}.  We work out the details of this technique in Section \ref{thurston's section}.\bigskip

The approaches via Tarski's theorem \ref{tarski} or Proposition \ref{thurston algebraicity prop} had not previously been used to prove algebraicity statements for circle packings.\bigskip

The third and final technique we exposit is due to McCaughan, and was applied in his Ph.D.\ thesis \cite{mccaughan-thesis}*{Chapters 8, 9} to prove Theorems \ref{main2} and \ref{main3}.  Although it was not noted by McCaughan, this approach also works to prove Theorem \ref{main1}.  The main tool is the following lemma of real algebraic geometry:

\begin{lemma}
\label{mccaughan}
Let $\bbV$ be a real algebraic variety defined over a field $k\subset \bbR$, in $n$ variables, and let $\bf v$ be a point of $\bbV$ which is isolated if we give $\bbV$ the subspace topology from $\bbR^n$.  Then the coordinates of $\bf v$ are algberaic over $k$.
\end{lemma}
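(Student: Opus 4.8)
The plan is to reduce the statement to a single-variable fact and then use either elimination of quantifiers (via Tarski's Theorem, which is already available to us) or an elementary argument about the field generated by the coordinates of $\bf v$. I will present the latter, more self-contained approach. Write $\bbV = V(f_1,\dots,f_m) \subset \bbR^n$ with each $f_i \in k[x_1,\dots,x_n]$, and let $L = k(v_1,\dots,v_n) \subset \bbR$ be the field generated over $k$ by the coordinates of $\bf v$. The claim is that $L/k$ is algebraic. Suppose not; then $L$ has positive transcendence degree over $k$, so after renumbering we may assume $v_1$ is transcendental over $k$.

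The key step is to produce a continuum of points of $\bbV$ near $\bf v$, contradicting the hypothesis that $\bf v$ is isolated. Since $\bf v$ is isolated in $\bbV$, there is a real number $\varepsilon > 0$ such that $\bf v$ is the only solution of $f_1 = \dots = f_m = 0$ with $\sum_i (x_i - v_i)^2 < \varepsilon^2$. This is a first-order statement about $(\bbR, +, \times, <)$ with parameters $v_1,\dots,v_n,\varepsilon$. The idea is to perturb $v_1$: consider the field $k(v_1) \subset L$, over which $v_2,\dots,v_n$ are still algebraic, and find for each $t$ in a small real interval around $v_1$ a point of $\bbV$ whose first coordinate is $t$. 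Concretely, let $k' = \overline{k(v_1)} \cap \bbR$ (real closure of $k(v_1)$ inside $\bbR$) — or more carefully, apply the Tarski--Seidenberg principle: the set $W = \{(t, x_2, \dots, x_n) : f_i(t,x_2,\dots,x_n) = 0 \ \forall i\}$ is a semialgebraic set defined over $k(v_1)$ (in fact over $k$, with $t$ a free variable), and $(v_1,v_2,\dots,v_n) \in W$. By the curve selection lemma, or by a direct dimension count, the fiber of $W$ over a neighborhood of $v_1$ in $\bbR$ is either zero-dimensional over each $t$, or $W$ contains an arc through $\bf v$; in the latter case $\bf v$ is not isolated in $\bbV$, a contradiction. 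So I must rule out the remaining case, where the fiber $W_t$ is finite for $t$ near $v_1$ and yet the number of such $t$ is a continuum — which by itself already gives infinitely many distinct points of $\bbV$ in the $\varepsilon$-ball once we show these fibers are nonempty for a continuum of $t$.

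To make this precise without the curve selection lemma, here is the cleanest route. Since $v_1$ is transcendental over $k$, the evaluation $k[x_1] \to \bbR$, $x_1 \mapsto v_1$, is injective, and the point $\bf v$ lies on the variety $\bbV_{\bbR(x_1)}$ obtained by base change to $\bbR(x_1)$ (viewing $v_2,\dots,v_n$ as elements of a field extension of $k(v_1)$ embedded in $\bbR$). One then shows, using that $\bbR$ is real closed and the implicit-function-style content of Tarski's Theorem, that the semialgebraic curve $\{(t, x_2(t), \dots, x_n(t))\}$, where $(x_2(t),\dots,x_n(t))$ is chosen to be a branch of the algebraic correspondence through $(v_2,\dots,v_n)$, varies continuously with $t$ in a neighborhood $I$ of $v_1$ and lies in $\bbV$. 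Shrinking $I$ so the curve stays in the $\varepsilon$-ball, we obtain uncountably many points of $\bbV$ near $\bf v$, contradicting isolation. Hence $\operatorname{trdeg}_k L = 0$, i.e.\ every $v_j$ is algebraic over $k$.

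The main obstacle is the middle step: producing the continuous branch $t \mapsto (x_2(t), \dots, x_n(t))$ of solutions. The difficulty is that the variety $\bbV$ may be singular at $\bf v$, so the classical implicit function theorem does not apply directly, and one cannot simply solve $f_i = 0$ for $x_2, \dots, x_n$ in terms of $x_1 = t$. The right tool here is the structure theory of semialgebraic sets over a real closed field — the Tarski--Seidenberg theorem guarantees that the projection of $\bbV$ to the $x_1$-axis is semialgebraic, hence a finite union of points and open intervals, and since it contains the transcendental value $v_1$ it must contain an open interval $I \ni v_1$; over that interval a definable (Skolem) choice function selects a point of the fiber, and semialgebraic functions are piecewise continuous, so after further shrinking $I$ we get the desired continuous branch. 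Alternatively — and this is arguably the slickest version — one invokes Tarski's Theorem itself: the statement ``$\bf v$ is isolated in $\bbV$'' is not first-order over $k$ because it involves the parameters $v_j$, but ``$\bbV$ has an isolated point not algebraic over $k$'' can be reformulated, after bounding degrees, as failure of a first-order scheme over $\bbR$, and comparing with $\bbR \cap \bar\bbQ$ forces the coordinates into $\bar\bbQ$. I expect the write-up to take the semialgebraic-geometry route, citing the cell decomposition / finiteness theorem for semialgebraic subsets of $\bbR$, since that keeps the argument short and avoids re-deriving model-theoretic machinery.
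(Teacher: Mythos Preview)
Your opening move matches the paper's: assume $\operatorname{trdeg}_k k(\mathbf{v}) > 0$ and manufacture points of $\bbV$ arbitrarily close to $\mathbf{v}$. But your execution has a real gap. The Skolem section $\sigma: I \to \bbR^{n-1}$ obtained from definable choice gives \emph{some} point $(t, \sigma(t)) \in \bbV$ for each $t$, but nothing forces $\sigma(v_1) = (v_2,\ldots,v_n)$; the continuous arc you produce may lie on a different sheet of $\bbV$ and never enter the $\varepsilon$-ball around $\mathbf{v}$. The same problem appears in your earlier dichotomy: you assert that the nonempty fibers $W_t$ for $t$ near $v_1$ give ``infinitely many distinct points of $\bbV$ in the $\varepsilon$-ball'', but there is no reason those fiber points should be close to $\mathbf{v}$. (A clean repair along your lines: the set of isolated points of $\bbV$ is finite and $k$-definable, so each coordinate of each such point is algebraic over $k$. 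But that is a different argument from the one you wrote.)

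The paper's proof sidesteps both this issue and the semialgebraic machinery. Write $k(\mathbf{v}) = k(y_1,\ldots,y_m,z)$ with $y_1,\ldots,y_m$ a transcendence basis and $z$ a primitive element over $k(y_1,\ldots,y_m)$. The minimal polynomial $p$ of $z$ is irreducible, hence (characteristic zero) satisfies $\partial p/\partial z \ne 0$ at the point, so the classical implicit function theorem applies to $p$---a single equation in a single unknown, which neatly bypasses the singularity of $\bbV$ that you correctly flagged. Perturbing the $y_i$ to nearby algebraically independent $\hat y_i$ and taking the corresponding nearby root $\hat z$ defines a field homomorphism $\alpha: k(\mathbf{v}) \to \bbR$ fixing $k$. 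Since the $f_i$ have coefficients in $k$, one has $f_i(\alpha(\mathbf{v})) = \alpha(f_i(\mathbf{v})) = 0$, so $\alpha(\mathbf{v}) \in \bbV$; and $\alpha(\mathbf{v})$ is close to but distinct from $\mathbf{v}$, contradicting isolation. The point is that the nearby solution is built not by selecting from the fibers of $\bbV$ but by moving $\mathbf{v}$ itself via a field map, which guarantees it lands near $\mathbf{v}$.
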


\noindent Then the idea is to express the existence of a suitably normalized circle packing in the complex or hyperbolic plane by a set of algebraic equations defining a variety.  One shows that this variety is non-empty and that the point of the variety corresponding to the circle packing is appropriately isolated, and then applies the lemma.  We work out the details of this approach, and also sketch a proof of Lemma \ref{mccaughan}, in Section \ref{mccaughan's section}.\bigskip

\noindent {\bf Acknowledgements.} We thank Ralf Spatzier for pointing us indirectly to Thurston's Proposition \ref{thurston algebraicity prop}.  Thanks to Chris Hall for referring us to Tarski's Theorem \ref{tarski}.  Thanks to Sergiy Merenkov for referring us to McCaughan's thesis \cite{mccaughan-thesis}.  Thanks to Andreas Blass, Dan Hathaway, and Scott Schneider for helpful background references on logic and model theory.

\section{Approach via Tarski's theorem}
\label{tarski's section}
\label{tarski section}

We begin the section with some terminology from model theory, and then go on to prove Theorems \ref{main1} and \ref{main2} using Tarski's theorem \ref{tarski}.  At the end of the section we describe the main difficulty of applying this technique to prove Theorem \ref{main3}.\medskip

We first give the precise definition of \emph{elementary statement}, to go with the statement of Tarski's theorem \ref{tarski}.  First, an \emph{atomic predicate in the theory of real-closed fields} is defined to be a relation of the form $g(x_1,\ldots,x_n) = 0$ or $g(x_1,\ldots,x_n) > 0$, where $g$ is a polynomial with integer coefficients in the variables $x_i$.  We fix once and for all a countable collection of free variables which we may use in our atomic predicates.  The collection of \emph{elementary predicates in the theory of real-closed fields} is defined to be the smallest collection
\begin{itemize}
\item containing all of the atomic predicates,
\item closed under negation $\neg$, disjunction $\vee$, and conjunction $\wedge$,
\item closed under existential quantification $(\exists x) P(x, y_1,\ldots,y_n)$, and
\item closed under universal quantification $(\forall x) P(x, y_1,\ldots,y_n)$.
\end{itemize}
In particular, elementary predicates are finitely long.  Finally, an \emph{elementary statement in the theory of real-closed fields} is an elementary predicate in that theory having no free variables, that is, all of whose variables have been appropriately quantified.

Tarski's theorem \ref{tarski} says that one cannot distinguish between different real-closed fields using only elementary statements.  We consider a simpler example: an \emph{elementary statement in the theory of groups} is defined in the analogous way, but the atomic predicates are just relations of the form $x_1\times x_2\times \cdots \times x_n = 1$.  We \emph{can} distinguish between different groups in this restricted language: for example, the statement $\exists x: \neg(x = 1)\wedge (x \times x \times x = 1)$ is true in $\bbZ/3\bbZ$, but is false in $\bbZ$.  Tarski's theorem says that this does not happen in the theory of real-closed fields.  For a more thorough exposition on these definitions and on Tarski's theorem, see \cite{swan-tarski}*{Section 2}.\medskip

\begin{proof}[Proof of Theorem \ref{main1}]
Let $G$ be a finite simple planar graph.  The existence of a circle packing having contact graph $G$ may be expressed as follows.  Enumerate the vertices of $V$ by $v_1,\ldots,v_n$.  For every vertex $v_i$ of $V$, let $x_i, y_i, r_i$ be free variables, which will naturally represent the $x$-coordinate, $y$-coordinate, and Euclidean radius, of the disk corresponding to $v_i$.
\begin{itemize}
\item For $1\le i\le n$, let $R_i$ be the atomic predicate $r_i > 0$.  These statement will ensure that the radii of our disks are positive.
\item Enumerate the edges $e_1,\ldots,e_m$ of $G$.  For every $e_i = \left<v_j, v_k\right>$, let $E_i$ be the atomic predicate $(x_j - x_k)^2 + (y_j - y_k)^2 = (r_j+r_k)^2$.  This statement will ensure that the disks corresponding to $v_j$ and $v_k$ are tangent.
\item Enumerate the pairs of vertices $v_j, v_k$ of $V$ which do \emph{not} share an edge.  Suppose there are $\ell$ such pairs.  For every such pair, let $F_i$, with $1\le i\le \ell$, be the atomic predicate $(x_j-x_k)^2 + (y_j-y_k)^2 > (r_j+r_k)^2$.  This statement will ensure that the disks corresponding to $v_j$ and $v_k$ do not meet.
\item Then, let $S$ be the elementary statement:
\[
\exists x_1,\ldots, x_n, y_1,\ldots, y_n, r_1,\ldots,r_n: R_1\wedge \cdots \wedge R_n \wedge   E_1 \wedge \cdots \wedge E_m \wedge  F_1 \wedge \cdots \wedge F_\ell 
\]
\end{itemize}

\noindent Then the truth of $S$ if the existential symbol is taken to quantify over $\bbR$ is equivalent to the existence of a circle packing in $\bbC$ having contact graph $G$.  Furthermore $S$ is true when quantified over $\bbR$, by the Koebe--Andreev--Thurston theorem, as we discussed before.  Thus by Tarski's theorem \ref{tarski}, the statement $S$ is also true when quantified over $\bbR \cap \bar\bbQ$.  Theorem \ref{main1} follows immediately.
\end{proof}

\begin{proof}[Proof of Theorem \ref{main2}]
Our approach is along the same lines as in the proof of Theorem \ref{main1}, but there are added difficulties.  We begin by pointing out the complications, and outlining our plan.  Let $X$ be a triangulation of a topological 2-torus.  Note first that the existence of a circle packing on some flat torus $\bbT$ realizing $X$ is equivalent to the existence of a doubly translation-periodic circle packing in the universal cover $\bbC$ of $\bbT$, realizing the lift of the triangulation $X$.  Thus we wish to express, via an elementary statement in the theory of real-closed fields, the existence of a $\tau$ in the upper half-plane, and a $\left<1,\tau\right>$-periodic circle packing $\tilde \calP$ in $\bbC$, so that the image of $\tilde\calP$ in the quotient $\bbT = \bbC/\left<1,\tau\right>$ is a circle packing $\calP$ in the torus $\bbT$ having contact graph equal to the 1-skeleton of $X$.  However, our elementary statement cannot have an infinite number of constituent atomic predicates, so we cannot simply pick $x_i, y_i, r_i$ for every vertex of the infinite packing $\tilde\calP$ and quantify over them.  Instead, our plan is to pick a sort of ``fundamental domain'' for $\tilde\calP$, having only finitely many disks, so that the image of these disks under the action by $\left<1,\tau\right>$ is all of $\tilde\calP$.\medskip

We now work out the proof in detail.  Beginning with the triangulation $X$, let $\bbT$ be a flat torus, and let $\calP$ be the circle packing in $\bbT$ having contact graph equal to the 1-skeleton of $X$, by the Discrete Uniformization Theorem \ref{dut}.  Let $\tilde\calP$ be its lift to $\bbC$.  We now apply some normalizations to $\tilde\calP$.

First, note that for example $\left<1,\tau\right>$ and $\left<1,\tau+1\right>$ act the same way on $\bbC$, so the choice of $\tau$ in the generators of the group which quotients $\tilde\calP$ to $\calP$ is not unique.  However, it is well-known (see \cite{MR510197}*{Section 7.2.3, Theorem 2}) that every flat torus is equivalent (by a similarity) to exactly one $\bbC/\left<1,\tau\right>$ for $\tau$ in the so-called \emph{fundamental region of the modular group}, which we will denote by $V$.  Specifically, letting $H = \{ z\in \bbC : \Im(z) > 0 \}$ denote the upper half-plane, we have:
\begin{align*}
V =  \left\{ z \in H: |z| > 1,\, |\Re(z)| < \frac{1}{2} \right\} & \cup \left\{ z \in H: |z| \geq 1,\, \Re(z)= - \frac{1}{2} \right\} \\
& \cup \left\{ z \in H: \left| z \right| = 1,\, -\frac{1}{2} < \Re(z)\leq 0 \right\}
\end{align*}
Normalize $\tilde\calP$ by Euclidean similarities so that $\calP = \tilde\calP / \left<1,\tau\right>$ for some $\tau\in V$.  By the essential uniqueness of $R$ in the Discrete Uniformization Theorem \ref{dut}, we have precisely one choice of $\tau \in V$ for this normalization.

Next, pick once and for all a distinguished vertex $v_0$ of $X$.  Let $D_0$ denote the disk of $\calP$ corresponding to $v_0$.  Apply translations to $\tilde\calP$ so that some lift $\tilde D_0$ of $D_0\in \calP$ is centered at the origin.  Then in particular the lifts of $D_0$ in $\tilde\calP$ are exactly the images of $\tilde D_0$ under the action by $\left<1,\tau\right>$.  Furthermore, by the essential uniqueness of $\calP$ in the Discrete Uniformization Theorem \ref{dut}, the resulting packing $\tilde\calP$ is the unique one which is $\left<1,\tau\right>$-periodic, so that there is a lift of $D_0$ centered at the origin.

(Actually, the last sentence is only true if $\bbC / \left<1,\tau\right>$ has no non-trivial automorphisms.  If $\bbC / \left<1,\tau\right>$ has non-trivial automorphisms then we will require one more normalizing condition to identify $\tilde\calP$ uniquely, but almost any reasonably chosen one will do, and it will be clear how to adapt the rest of our argument to include the extra normalization.  We therefore make no further mention of this point.)

We fix a fundamental domain in $\bbC$ for the action by $\left<1,\tau\right>$.  Let $P$ be the parallelogram spanned by $1$ and $\tau$, so that the following hold:
\begin{itemize}
\item The only corner we include in $P$ is the bottom-left-hand corner, the origin.
\item We include the open bottom and left sides, but neither the top nor the right side of the parallelogram $P$.
\end{itemize}
Explicitly, if $\tau = a + b \sqrt{-1}$ for $a,b\in \bbR$, then we may express $P$ as:
\[
P = \{ z \in \bbC : 0 \le \Im(z) < b, (b/a) \Im(z) \le \Re(z) < (b/a) \Im(z) + 1 \}
\phantomsection
\label{fundamental parallelogram}
\]
This expression will be helpful later.

Let $V_P$ be the set of vertices of the contact graph of $\tilde\calP$ which correspond to disks whose centers lie in $P$.  The notation in the following discussion can get messy, so to help we adopt the convention that if $v$ is a vertex of the contact graph of a packing, say $\calP$, then we write $\calP(v)$ to denote the disk to which $v$ corresponds.  Similarly, if $S$ is a subset of vertices of the contact graph of $\calP$, then $\calP(S)$ is the collection of disks of $\calP$ corresponding to vertices of $S$.  Then for example the image of $\tilde\calP(V_P)$ under the action by $\left<1,\tau\right>$ is all of $\tilde\calP$, and furthermore no two disks of $\tilde\calP(V_P)$ are identified under this action.

Let $V_{P+1}$ be the set of vertices of the contact graph of $\tilde\calP$ which correspond to disks whose centers lie in $P+1$, similarly $V_{P+\tau}, V_{P+1+\tau}$.  Note that then $V_P, V_{P+1}, V_{P+\tau}, V_{P+1+\tau}$ are pairwise disjoint.  Let $\tilde\calQ$ be the sub-packing of $\tilde\calP$ consisting only of those disks corresponding to vertices in $V_P \cup V_{P+1} \cup V_{P+\tau} \cup V_{P+1+\tau}$.  This $\tilde\calQ$ will act as our ``fundamental domain'' for the packing $\tilde\calP$.
\medskip

We now construct an elementary statement in the same style as in the proof of Theorem \ref{main1} above.  Enumerate the vertices $ v_1,\ldots, v_{4n}$ of the contact graph $G(\tilde\calQ)$ of $\tilde\calQ$.  Label the $v_i$ so that
\begin{itemize}
\item the disks of $\tilde\calQ$ having centers in $P$ are exactly $\tilde\calQ(v_1),\ldots,\tilde\calQ(v_n)$,
\item those having centers in $P + 1$ are $\tilde\calQ(v_{n+1}),\ldots,\tilde\calQ(v_{2n})$,
\item those having centers in $P + \tau$ are $\tilde\calQ(v_{2n+1}),\ldots,\tilde\calQ(v_{3n})$, and
\item those having centers in $P + 1+ \tau$ are $\tilde\calQ(v_{3n+1}),\ldots,\tilde\calQ(v_{4n})$.
\end{itemize}
Fix free variables $x_1,\ldots, x_{4n}, y_1,\ldots,y_{4n}, r_1,\ldots,r_{4n}$, which will correspond to the centers and radii of the disks of $\tilde\calQ$ in the natural way.  It will be helpful notationally to fix two additional free variables $a,b$, which will stand for the real and imaginary parts of $\tau$, and three additional free variables $x_0, y_0, r_0$, which will eventually be set equal to $x_i, y_i, r_i$ for some $i$, namely that $i$ for which $\tilde v_i$ corresponds to our distinguished disk $\tilde D_0 \in \tilde\calQ \subset \tilde\calP$.

\begin{itemize}
\item Define $R_i, E_i, F_i$ for $G(\tilde\calQ)$ as we did for $G$ in the proof of Theorem \ref{main1}.
\item Let $v_i$ be the vertex of $G(\tilde\calQ)$ corresponding to the disk $\tilde D_0 \in \tilde\calQ \subset \tilde\calP$.  Let $Z$ be the elementary predicate $x_i = x_0\wedge y_i = y_0 \wedge r_i = r_0 \wedge x_0 = 0 \wedge y_0 = 0$.  We do this for notational convenience, and to encode the normalization that $\tilde D_0$ is centered at the origin.
\item Recall that the disks $\tilde D_0 + \tau$ and $\tilde D_0 + 1$ both lie in $\tilde\calQ$.  Suppose that the vertices of $G(\tilde\calQ)$ corresponding to these two translates of $\tilde D_0$ are $v_j$ and $v_k$ respectively.  Then let $T$ be the elementary predicate $x_j = a \wedge y_j = b \wedge x_k = 1 \wedge y_k = 0$.  This statement further encodes the normalizations on our packing, and picks out $\tau = a + b\sqrt{-1}$ for us.  (We write $\sqrt{-1}$ to avoid overloading the variable $i$, which we use frequently as an indexing variable.)
\item Let $W$ be the elementary predicate expressing that $\tau = a + b\sqrt{-1}$ lies in the fundamental region $V$ of the modular group, in terms of only $a$ and $b$.  It is clear how to do this given the equation defining $V$, although to write it out would be messy and unilluminating, so we leave it to the dedicated reader.
\item Enumerate the pairs of disks of $\tilde\calQ$ which are identified under the action of $\left<1,\tau\right>$.  Suppose there are $p$ such pairs.  For the $i$th such pair $\tilde\calQ(v_j), \tilde\calQ(v_k)$, with $1\le i\le p$, we have that $\tilde\calQ(v_j) = \tilde\calQ(v_k) + s + t\tau$, for some pair of integers $s$ and $t$.  Then let $U_i$ be the elementary predicate $r_j = r_k \wedge x_j = x_k + s + ta \wedge y_j = y_k + tb$.  These statements encode the desired action of $\left<1,\tau\right>$ on our packing.  Note that $s$ and $t$ are not variables in $U_i$, but are actual integers.
\item For $1\le i\le n$, let $K_i$ be the elementary predicate expressing that $(x_i, y_i) \in P$, in terms of only $x_i, y_i, a, b$.  Again, it is clear how to do this, given the equation defining $P$.  Similarly,
\begin{itemize}
\item for $n+1 \le i \le 2n$, let $K_i$ express that $(x_i,y_i) \in P+1$,
\item for $2n+1 \le i\le 3n$, let $K_i$ express that $(x_i,y_i)\in P + a + b\sqrt{-1}$, and
\item for $3n+1 \le i\le 4n$, let $K_i$ express that $(x_i,y_i)\in P + 1 +  a + b\sqrt{-1}$.
\end{itemize}
%\item We also remark at this point, for future reference, that given two representatives $D, D' \in \tilde\calQ$ for the same disk of $\calP$, that is, given two disks of $\tilde\calQ$ which differ by an element of $\left<1,\tau\right>$, the two disks $D,D'$ have disjoint interiors because $\tilde\calP$ is a circle packing, and in fact are completely disjoint, because of the regularity conditions on our triangulation $X$, see p.\ \pageref{triangulations}.  Therefore for any such pair there is an elementary predicate $F_i$ encoding that they should be disjoint.
%\item Note that the radius of every disk of $\tilde\calP$ is bounded above by $1/2$ and by $|\tau|/2$.  This is because if $\tilde D \in \tilde\calP$, then $\tilde D + 1$ and $\tilde D + \tau$ are both in $\tilde\calP$ as well, and the disks of $\tilde\calP$ form a circle packing, thus have disjoint interiors.  Also, the regularity conditions on our triangulation $X$ preclude any tangencies between two representatives for the same disk of $\calP$, see p.\ \pageref{triangulations}, so these upper bounds are strict inequalities.  Therefore, for $1\le i\le n$, let $P_i$ be the elementary predicate $2r_i < 1 \wedge 4r^2_i < a^2 + b^2$.  The predicates $P_i$ are simply for our convenience, and turn out to be superfluous.
\item Then, let $S$ be the elementary statement:
\begin{align*}
\exists x_1,\ldots, x_{3n}, y_1,\ldots, y_{3n}, r_1,\ldots,r_{3n}: & R_1\wedge \cdots \wedge R_{3n} \wedge E_1 \wedge \cdots \wedge E_m \\% \wedge P_1 \wedge \cdots \wedge P_n\\
\wedge &  F_1 \wedge \cdots \wedge F_\ell \wedge U_1 \wedge \cdots \wedge U_p \\
\wedge & K_1\wedge \cdots \wedge K_{3n} \wedge Z \wedge T \wedge W
\end{align*}
\end{itemize}
Clearly $S$ is true when quantified over $\bbR$, because the centers and radii of the disks of the packing $\tilde\calQ$ satisfy $S$.  Then by Tarski's theorem \ref{tarski} the statement $S$ is true when quantified over $\bbR \cap \bar\bbQ$ as well.  We wish to show that there is a unique tuple of values for the $x_i, y_i, r_i \in \bbR$ making the constituent predicates of $S$ true, and it will then follow that in fact these must lie in $\bbR \cap \bar \bbQ$, completing the proof.\medskip

Suppose that $x_1,\ldots,x_{3n},y_1,\ldots,y_{3n},r_1,\ldots,r_{3n}, x_0, y_0, r_0, a, b\in \bbR$ is any solution to the statement $S$.  That is, if we plug in these values for the variables into $S$, then all of its constituent predicates evaluate to true.  Let $\tilde\calD_0$ be the circle packing in $\bbC$ having $n$ disks, so that the $i$th disk of $\tilde\calD_0$ has center $(x_i,y_i)$ and radius $r_i$.  Now, the disks of $\tilde\calD_0 + 1$ are exactly those whose centers and radii are $(x_i, y_i), r_i$ for $n+1 \le i\le 2n$, so the disks of $\tilde\calD_0$ and $\tilde\calD_0 + 1$ are pairwise interiorwise disjoint.  Similarly, the disks of $\tilde\calD_0$ are pairwise interiorwise disjoint from those of $\tilde\calD_0 + a + b \sqrt{-1}$ and of $\tilde\calD_0 + 1 + a + b\sqrt{-1}$.  It follows that $\tilde\calD = \cup_{\lambda \in \left<1,a + b\sqrt{-1}\right>} \tilde\calD_0 + \lambda$ is a circle packing in $\bbC$, which is invariant under the action of $\left<1,a + b\sqrt{-1}\right>$.  Furthermore it is clear that $\calD = \tilde\calD / \left<1,a + b\sqrt{-1}\right>$ has the same contact graph as our initial packing $\calP$.  Finally, by construction of $S$, we have that $\tilde\calD$ is normalized so that some lift of $\calD(v_0)$ is centered at the origin, with $a + b\sqrt{-1} \in V$.  It follows that $\tilde\calD = \tilde\calP$, and Theorem \ref{main2} is proved.
\end{proof}

We conclude the section with a brief discussion of the main difficulty in applying this approach to  prove Theorem \ref{main3}:\medskip

The natural approach is to try to follow the proof of Theorem \ref{main2}.  Then the $x_i, y_i, r_i$ can be taken to represent hyperbolic centers and radii in the Poincar\'e disk model of hyperbolic space.  It turns out that it is possible to express algebraically that the hyperbolic distance between $(x_i,y_i)$ and $(x_j,y_j)$ is $r_i + r_j$, see our proof of Theorem \ref{main3} in Section \ref{mccaughan's section}, so this is not an obstruction.  It is also possible to algebraically normalize the lift $\tilde\calP$ of our packing $\calP$ in $R$, where $R$ is a compact complete constant curvature $-1$ Riemann surface.  Suppose that $R = \bbH^2 / \Gamma$ for $\Gamma\subset \PSL_2\bbR = \Isom^+ \bbH^2$, so that $\tilde\calP /\Gamma = \calP$.  Then the elements of $\Gamma$ are matrices over $\bbR$, and it is possible to express the action of such a matrix on a point $z\in \bbH^2 \cong \bbD$ algebraically in terms of its matrix entries and the coordinates of $z$.

The difficulty in making the proof go through is in picking out generators for the quotient group $\Gamma$ so that $\bbH^2 / \Gamma \cong R$.  In our proof of Theorem \ref{main2} given immediately above, we were fortunate to be able to pick these generators to be $1$ and $\tau\in V$.  In our setting, if we wish to write down an elementary sentence $S$ which is to have precisely one solution over $\bbR$, expressing the existence of $\tilde\calP$ and our selected generators for $\Gamma$, then we need a normalization on our generating set which identifies it uniquely.  This normalization needs to be expressible via a finite list of elementary predicates in the variables we quantify over in $S$.  It is not clear how to do this.

The proof in Section \ref{mccaughan's section} follows a similar strategy to the one we have described here.  However, the reason that the proof in Section \ref{mccaughan's section} goes through is that it expresses the existence of the desired normalized circle packing $\tilde\calP$ via polynomial equations defining an algebraic variety, and in that setting we may encode all of the infinitely many contacts between the disks of $\tilde\calP$.  This is not possible in the present setting, as elementary sentences must be finite.

\section{Approach via hyperbolic geometry and Proposition \ref{thurston algebraicity prop}}
\label{thurston's section}

We begin the section with a proof of Theorem \ref{main1}, using Proposition \ref{thurston algebraicity prop}.  This section relies on some basic, standard tools and facts from hyperbolic geometry.  For a general reference on hyperbolic geometry, see for example \cite{MR2249478}.  After the proof of Theorem \ref{main1}, we sketch the proof of Proposition \ref{thurston algebraicity prop}, for the convenience of the reader.  Finally, we briefly remark on the potential difficulty of applying this approach to Theorems \ref{main2} and \ref{main3}.

\begin{proof}[Proof of Theorem \ref{main1}]
Let $G$ be a finite simple planar graph.  We show that there is a circle packing $\calD$ in $\bbC$ having contact graph $G$, so that all Euclidean centers and radii of disks of $\calD$, as well as all tangency points of pairs of disks of $\calD$, are algebraic numbers.

First, let $X$ be a triangulation of $\bbS^2$, so that the 1-skeleton of $X$ has $G$ as a subgraph.  We will find a circle packing in $\bbC$ realizing $X$, which has the algebraicity properties described in the previous paragraph, and obtain $\calD$ as a sub-packing, completing the proof.

We recall a notational convention from the previous section, that if $v$ is a vertex of the contact graph of the circle packing $\calP$, then $\calP(v)$ denotes the disk of $\calP$ corresponding to the vertex $v$.

Let $\calP$ be any circle packing in $\hat\bbC$ realizing $X$.  Then all of the connected components of $\hat\bbC \setminus \cup_{D\in \calP} D$ are curvilinear triangles, and furthermore these triangles are in natural bijection with the set of faces $F$ of $X$.  We now construct the so-called \emph{dual circle packing} to $\calP$, denoted $\calP^*$.  For each such triangle $T_f$, corresponding to the face $f = \left<v_1,v_2,v_3\right>\in F$, let $\calP^*(f)$ denote the closed disk containing $T_f$, whose boundary circle passes through the three corners of $T_f$, that is, the three tangency points between pairs of the disks $\calP(v_1), \calP(v_2), \calP(v_3)$.  Then $\calP^*$ is defined to be the collection of disks $\{\calP^*(f)\}_{f\in F}$.  It is well-known that the boundary circle $\partial\calP^*(f)$ is orthogonal to all three of $\partial \calP(v_1), \partial\calP(v_1), \partial\calP(v_2)$, see Figure \ref{orthogonality}.  It follows that the disks of $\calP^*$ are pairwise interiorwise disjoint, thus that $\calP^*$ is a circle packing in $\hat\bbC$.  Furthermore the two packings $\calP$ and $\calP^*$ together completely cover $\hat\bbC$.

\begin{figure}
\centering
% Generated with LaTeXDraw 2.0.8
% Sat Mar 30 18:29:46 PDT 2013
% \usepackage[usenames,dvipsnames]{pstricks}
% \usepackage{epsfig}
% \usepackage{pst-grad} % For gradients
% \usepackage{pst-plot} % For axes
\scalebox{1} % Change this value to rescale the drawing.
{
\begin{pspicture}(0,-2.55)(5.0,2.55)
\pscircle[linewidth=0.018,dimen=outer](1.35,1.2){1.35}
\pscircle[linewidth=0.018,dimen=outer](3.84,1.19){1.16}
\pscircle[linewidth=0.018,dimen=outer](2.79,-1.14){1.41}
\pscircle[linewidth=0.018,linestyle=dashed,dash=0.16cm 0.16cm,dimen=outer](2.71,0.44){0.77}
\psdots[dotsize=0.12](2.7,1.19)
\psdots[dotsize=0.12](2.06,0.05)
\psdots[dotsize=0.12](3.38,0.13)
\usefont{T1}{ptm}{m}{n}
\rput(2.9814062,1.415){$\theta_1$}
\usefont{T1}{ptm}{m}{n}
\rput(2.4214063,0.875){$\theta_2$}
\usefont{T1}{ptm}{m}{n}
\rput(2.1414063,0.415){$\theta_3$}
\usefont{T1}{ptm}{m}{n}
\rput(2.0814064,-0.265){$\theta_4$}
\usefont{T1}{ptm}{m}{n}
\rput(3.4614062,-0.205){$\theta_5$}
\usefont{T1}{ptm}{m}{n}
\rput(3.2614062,0.495){$\theta_6$}
\usefont{T1}{ptm}{m}{n}
\rput(2.9214063,0.915){$\theta_7$}
\end{pspicture} 
}
\caption
{
\label{orthogonality} We get first that $\theta_1 = \theta_2$, $\theta_3 = \theta_4$, $\theta_5 = \theta_6$, because these are pairs of vertical angles.  On the other hand we get that $\theta_2 = \theta_3$, $\theta_4 = \theta_5$, $\theta_6 = \theta_7$, because any pair of round circles which cross meet at the same angle at their two intersection points.  We conclude that $\theta_1 = \theta_7 = \pi/2$.
}
\end{figure}
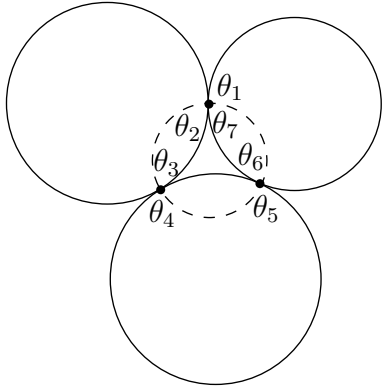

We now wish to define the \emph{reflection group} associated to the packing $\calP$.  We first establish some conventions and notation.  For the rest of this proof, we identify the Riemann sphere $\hat\bbC$ with the boundary at infinity of hyperbolic 3-space $\bbH^3$.  Recall that this identification is consistent with the identification of the group of M\"obius transformations $\PSL_2\bbC$ with the group of orientation-preserving isometries $\Isom^+\bbH^3$ of $\bbH^3$.  Every circle $C$ in $\hat\bbC$ is now the boundary of a totally geodesic copy of $\bbH^2$ in $\bbH^3$.  Denote by $\sigma_C$ the reflection of $\bbH^3$ along this hyperbolic plane.  The induced boundary map of $\sigma_C$ on $\hat\bbC$ is inversion along the circle $C$ in the sense of projective geometry.  Note that $\sigma_C \not \in \Isom^+\bbH^3$, as it is orientation-reversing.

Then the \emph{reflection group} associated to $\calP$, which we will denote by $\hat\Gamma \subset \Isom\bbH^3$, is the group generated by all of the reflections $\sigma_C$, for $C \in \cup_{D\in \calP \cup \calP^*} \partial D$.  The following lemma is well-known, but we sketch its proof for the convenience of the reader.

\begin{lemma}
\label{lattice}
The group $\hat\Gamma$ is discrete and the associated orbifold $\bbH^3/\hat\Gamma$ has finite volume. 
\end{lemma}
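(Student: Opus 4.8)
The plan is to realize $\hat\Gamma$ as the reflection group of an explicit finite-volume hyperbolic polyhedron and then quote Poincar\'e's polyhedron theorem. For a disk $D$ of $\calP$ or $\calP^*$, write $B_D \subset \bbH^3$ for the open hyperbolic half-space bounded by the totally geodesic plane with ideal boundary $\partial D$ and having ideal boundary the open disk $\mathrm{int}(D)$; that is, $B_D$ is the set of points of $\bbH^3$ lying beneath $D$. I would take as candidate fundamental domain
\[
W \ :=\ \bbH^3 \setminus \bigcup_{D \in \calP \cup \calP^*} B_D ,
\]
the set of points of $\bbH^3$ not beneath any disk of either packing. As an intersection of finitely many hyperbolic half-spaces, $W$ is a convex, finite-sided polyhedron, bounded by the planes over the circles bounding the disks of $\calP$ and $\calP^*$; each such plane genuinely contributes a two-dimensional face of $W$, since over the ``central region'' of the corresponding disk $D$ — the part of $\mathrm{int}(D)$ contained in no other disk, which for $D=\calP^*(f)$ is just the curvilinear triangle $T_f$ — the plane lies in $\partial W$.

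The crucial point is that $W$ has finite volume, and this is where the hypothesis that $\calP$ realizes a \emph{triangulation} (hence that $\calP$ and $\calP^*$ cover $\hat\bbC$) enters. The ideal boundary of $W$ is $\hat\bbC \setminus \bigcup_D \mathrm{int}(D)$. A point lying in no open disk lies on the boundary of every disk containing it, and since the closed disks cover $\hat\bbC$ it must lie on at least two such circles (otherwise a neighborhood of it would be uncovered); using interiorwise disjointness within each packing one then checks it must be one of the tangency points $p_e$ of $\calP$, at which $\partial\calP(v_i)$, $\partial\calP(v_j)$, $\partial\calP^*(f_1)$ and $\partial\calP^*(f_2)$ all meet ($e = \langle v_i, v_j\rangle$, with $f_1,f_2$ the two faces of $X$ containing $e$). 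Thus the ideal boundary of $W$ is the finite set $\{p_e : e \in E\}$, so $W$ has finitely many ideal vertices and compact closure away from them; its volume is finite provided the ends at the $p_e$ are genuine cusps, which I verify next.

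It remains to confirm that $W$ is a Coxeter polyhedron satisfying the hypotheses of Poincar\'e's theorem. Any two of the relevant circles are disjoint, tangent, or orthogonal: two circles of $\calP$, or two of $\calP^*$, are disjoint or tangent, while $\partial\calP(v)$ and $\partial\calP^*(f)$ are orthogonal when $v$ is a vertex of $f$ and disjoint otherwise. Hence every dihedral angle of $W$ along a genuine edge is $\pi/2$, and two faces over tangent circles are asymptotic, meeting only at an ideal vertex $p_e$. To check the parabolic (cusp) condition at $p_e$, normalize by a M\"obius transformation so that $p_e = \infty$ in the upper half-space model: the four faces of $W$ at $p_e$ become four vertical planes bounding a ``square chimney'' (two parallel pairs, orthogonal to one another), and the subgroup of $\hat\Gamma$ generated by the reflections in them acts on a horosphere about $p_e$ as the Euclidean reflection group in the sides of a rectangle — in particular cocompactly. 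This is exactly the parabolic condition, and it also makes $\vol(W)$ finite near $p_e$. Poincar\'e's polyhedron theorem then gives that the group generated by the reflections in the faces of $W$ is discrete with fundamental domain $W$; since those faces lie in exactly the planes over the circles bounding disks of $\calP\cup\calP^*$, this group is $\hat\Gamma$, whence $\hat\Gamma$ is discrete and $\bbH^3/\hat\Gamma$ has volume $\vol(W)<\infty$.

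The step I expect to be the real content is choosing the right polyhedron. One must resist taking a chamber of the hyperplane arrangement $\{\partial B_D\}$ as the fundamental domain: a chamber lying over one of the curvilinear interstices $T_f$ is never cut by any of these circles, so it has infinite hyperbolic volume, and it is instead the ``outside'' polyhedron $W$, cut out by all of the hemispheres simultaneously, that is finite. After that, the one genuinely geometric check is the cusp condition at the tangency points; the remaining items — that $W$ is nonempty (it contains a full vertical ray over each $p_e$), finite-sided, connected, and that its faces are precisely the generators of $\hat\Gamma$ — are routine bookkeeping.
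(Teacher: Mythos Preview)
Your proof is correct and follows essentially the same route as the paper: define the polyhedron $W$ (the paper calls it $\Delta$) as the complement in $\bbH^3$ of the half-spaces lying under the disks of $\calP\cup\calP^*$, observe that all genuine dihedral angles are $\pi/2$ by orthogonality, apply Poincar\'e's polyhedron theorem, and verify finite volume by sending each tangency point to $\infty$ and seeing a rectangular chimney. If anything, your write-up is more careful than the paper's sketch---you explicitly identify the ideal boundary of $W$ as the finite set $\{p_e\}$ and verify the parabolic condition at each cusp, both of which the paper leaves implicit.
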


\begin{proof}
Given a round closed disk $D$ in $\hat\bbC$, let $H_D$ be the hyperbolic half-space whose closure in $\bbH^3 \cup \hat\bbC = \bbH^3 \cup \partial_\infty \bbH^3$ is equal to $D$.  Let $\Delta$ denote the closure, in $\bbH^3$, of the complement of $\cup_{D\in \calP \cup \calP^*} H_D$.  Then $\Delta$ is a polyhedron in $\bbH^3$ with finitely many faces and $\hat\Gamma$ is the group of isometries of $\bbH^3$ generated by reflections along the faces of $\Delta$.  The fact that the circles of $\calP$ meet those of $\calP^*$ orthogonally implies that the dihedral angles of $\Delta$ are all equal to $\pi/2$.  In particular, it follows from Poincar\'e's polyhedron theorem, see \cite{MR1279064}, that $\hat\Gamma$ is a discrete group with fundamental domain $\Delta$.

We still need to prove that $\Delta$ has finite volume.  The cusps of $\Delta$ are in bijection with the tangency points of pairs of disks of $\calP$, equivalently the tangency points of pairs of disks of $\calP^*$.  Let $p$ be such a tangency point.  Then there are two disks of $\calP$, and two of $\calP^*$, whose boundary circles pass through $p$.  Call these disks $D_1, D_2, D_3, D_4$.  For a moment we consider hyperbolic space $\bbH^3 = \bbC \times \bbR_+$ in the upper half-space model, normalized so that $p = \infty$.  In this model, under this normalization, each of the hyperbolic half-spaces $H_{D_1}, H_{D_2}, H_{D_3}, H_{D_4}$ becomes the intersection of $\bbC \times \bbR_+$ with a vertical Euclidean half-space.  Furthermore because the disks of $\calP$ meet those of $\calP^*$ orthogonally, we in fact have that $\bbH^3 \setminus \cup_{i=1}^4 H_{D_i}$ is an open rectangular column.  A straightforward computation shows that the hyperbolic volume contained in this column above any fixed Euclidean height is finite.  Thus $\Delta$ has finite volume in a neighborhood of any of its cusps.  But it has finitely many cusps, and the complement in $\Delta$ of some small open neighborhoods of its cusps is compact.  It follows that $\Delta$ has finite volume, concluding the proof of Lemma \ref{lattice}.
\end{proof}

Next, let $\Gamma$ denote the group of orientation-preserving elements of $\hat\Gamma$.  Then $\Gamma\subset \PSL_2\bbC \cong \Isom^+\bbC$ is an index 2 subgroup of $\hat\Gamma$.  It follows from Lemma \ref{lattice} that $\Gamma$ is discrete, and that $\bbH^3/\Gamma$ has finite volume.

We can now apply Proposition \ref{thurston algebraicity prop}.  Then there is a $g\in \PSL_2\bbC$ so that $g\Gamma g^{-1} \subset \PSL_2\bar\bbQ$.  Noting that the reflection group associated to the circle packing $g \calP = \{gD\}_{D\in \calP}$ is exactly $g\hat\Gamma g^{-1}$, we conclude that we may assume without loss of generality that $\Gamma \in \PSL_2\bar\bbQ$.  It will also be helpful to assume, again without loss of generality, that $\infty$ lies outside of the disks of $\calP$, so $\calP$ is a packing in $\bbC$.

%The discreteness of $\Gamma$ implies that every infinite order element $\gamma\in\Gamma$ is either hyperbolic or parabolic.  In particular, such a $\gamma$ fixes either one or two points of $\hat\bbC$.

The condition that a point $z$ is fixed by an element $\gamma\in \PSL_2\bar\bbQ$ is algebraic in $z$ and the matrix entries of $\gamma \in \PSL_2\bar\bbQ$.  Thus we get that any fixed point of $\gamma\in \Gamma$ is algebraic.  On the other hand, if $p$ is a tangency point between some pair of disks $D_1,D_2 \in \calP$, then $p$ is fixed by $\sigma_{\partial D_1}\sigma_{\partial D_2} \in \Gamma$.  We conclude that all tangency points between disks of $\calP$ are algebraic.  It is then a simple computation to show that the Euclidean centers and radii of the disks of $\calP$ are algebraic: this is because there is a unique triple of pairwise tangent disks, whose tangency points are $p_1, p_2, p_3$, and the conditions defining this triple are algebraic in the Euclidean $x$- and $y$-coordinates of their centers, their Euclidean radii, and the coordinates of $p_1, p_2, p_3$.  This concludes the proof of Theorem \ref{main1}.
\end{proof}

Proposition \ref{thurston algebraicity prop} is a particular case of  a more general result of Raghunathan \cite{MR0507234}.  We now sketch the proof of Proposition \ref{thurston algebraicity prop} in the case that $\Gamma$ is cocompact:

\begin{proof}[Proof sketch of Proposition \ref{thurston algebraicity prop}]
Consider $\Gamma$ as an abstract group and denote the embedding of $\Gamma$ inside $\PSL_2\bbC$ by $\rho_0$. We consider $\rho_0$ as an element of $\Hom(\Gamma,\PSL_2 \bbC )$. The group $\PSL_2\BC$ is an affine algebraic group defined over $\BQ$. In particular, $\Hom(\Gamma,\PSL_2\BC)$ is also an affine algebraic variety defined over $\BQ$. Hence, the Galois group $\Gal(\BC/\BQ)$ acts on $\Hom(\Gamma,\PSL_2\BC)$. The variety $\Hom(\Gamma,\PSL_2\BC)$ has only finitely many irreducible components and the action of the Galois group permutes them. It follows that every irreducible component of $\Hom(\Gamma,\PSL_2\BC)$ is invariant by a finite index subgroup of $\Gal(\BC/\BQ)$. This implies that each one of the irreducible components is defined over a finite extension of $\BQ$ and hence defined over $\bar\BQ$. Let $\bbV$ be the irreducible component of $\Hom(\Gamma,\PSL_2\BC)$ containing $\rho_0$. Since $\bbV$ is defined over $\bar\BQ$ and $\bar\BQ$ is algebraically closed, we get that $\bbV$ contains an element $\rho$ which is defined over $\bar\BQ$. It follows from the Mostow rigidity theorem that $\rho$ and $\rho_0$ are conjugated, and this concludes the proof of Proposition \ref{thurston algebraicity prop} in the case that $\Gamma$ is cocompact. In the general (finite covolume) case, one has to replace the variety $\Hom(\Gamma,\PSL_2\BC)$ with the subvariety consisting of representations which map parabolic elements to elements whose trace squared is 4.  The argument then is concluded using Prasad's extension of the Mostow rigidity theorem to the finite volume setting.
\end{proof}

Finally, we discuss the main difficulty in applying this approach to the settings of Theorems \ref{main2} and \ref{main3}.  The natural approach to take is to construct a packing $\calP$ in a compact Riemann surface $R$, lift it to the universal cover (either $\bbC$ or $\bbH^2$) of $R$, obtain algebraicity in the universal cover, and then apply known rigidity results to obtain algebraicity in the quotient.  However, the reflection group of the lifted packing in the universal cover will not have finite covolume in $\bbH^3$, thus Proposition \ref{thurston algebraicity prop} does not apply.  Furthermore the finite covolume hypothesis is essential in the proof of Proposition \ref{thurston algebraicity prop}.

\section{Approach via real algebraic geometry and Lemma \ref{mccaughan}}
\label{mccaughan's section}

We begin by applying Lemma \ref{mccaughan} to prove Theorem \ref{main1}.  This lemma was previously applied by McCaughan in \cite{mccaughan-thesis}*{Chapters 8, 9} to prove Theorems \ref{main2} and \ref{main3}, and we give these proofs below.  He did not observe that the same lemma works to prove Theorem \ref{main1} as well, but there is essentially nothing new in the argument we give.  After these three proofs, we sketch a proof of Lemma \ref{mccaughan} for the convenience of the reader.

\begin{proof}[Proof of Theorem \ref{main1}]
Let $G$ be a finite simple planar graph.  We wish to show the existence of a circle packing $\calD$ in $\bbC$, so that all Euclidean centers and radii of the disks of $\calD$ are algebraic.  It will follow that the points of tangency between disks of $\calD$ are algebraic.

Let $X$ be a triangulation of $\bbS^2$ the 1-skeleton of which has $G$ as a subgraph.  Let $\calP$ be a circle packing in $\bbC$ realizing $X$, by the Koebe--Andreev--Thurston theorem.  We will show that $\calP$ can be taken to have the algebraicity properties described in the preceding paragraph, and we may obtain our desired $\calD$ as a sub-packing of $\calP$, completing the proof of the theorem.

We now apply some normalizations to $\calP$.  First, pick some face $\left<v_1,v_2,v_3\right>$ of $X$.  Normalize $\calP$ so that $\calP(v_1), \calP(v_2), \calP(v_3)$ are disks of Euclidean radius $1$, having Euclidean centers $(-1,0), (1,0), (0,\sqrt{3})$ respectively, and so that the rest of the disks of $\calP$ lie in the bounded curvilinear triangular region formed between $\calP(v_1), \calP(v_2), \calP(v_3)$.  The resulting packing $\calP$ is the unique one having contact graph $X$ and satisfying these normalization conditions.

Suppose that there are $n$ vertices of $X$.  Let $\bbV$ be the algebraic variety in $\bbR^{3n}$ defined by equations of the form $(x_i - x_j)^2 + (y_i - y_j)^2 = (r_i + r_j)^2$ for every pair of vertices $v_i, v_j$ of $X$ sharing an edge, together with the extra equations $r_1 = r_2 = r_3 = 1, x_1 = -1, y_1 = 0, x_2 = 1, y_2 = 0, x_3 = 0, y_3 = \sqrt{3}$.  Let ${\bf v}$ be the $3n$-tuple consisting of the centers and radii of the disks of $\calP$.  Then ${\bf v}\in \bbV$.

We wish to argue that $\bf v$ is isolated in $\bbV$ under the subspace topology from $\bbR^{3n}$.  Then Theorem \ref{main1} will be proved, by Lemma \ref{mccaughan}.  First note that $\bf v$ is well-separated from points of $\bbV$ where some radius $r_i$ is negative or zero, because there are finitely many $r_i$, and all of them are positive in $\bf v$.  Suppose that ${\bf v}_i$ is a sequence of points of $\bbV$, converging to $\bf v$, so that every radius coordinate of every ${\bf v}_i$ is positive.  Pass to a subsequence so that $|{\bf v}_{i+1} - {\bf v}| < |{\bf v}_{i} - {\bf v}|$ for all $i$.  We claim that then eventually ${\bf v}_i = {\bf v}$.

To see why, let $\calP_i$ be the collection of disks having centers and radii given by the coordinates of ${\bf v}_i$ in the natural way.  Note first that if $v_j$ and $v_k$ are vertices of $X$ with no edge between them, then eventually $\calP_i(v_j)$ and $\calP_i(v_k)$ are disjoint, because their limiting disks are disjoint.  On the other hand, if $\left<v_j, v_k\right>$ is an edge of $X$, then the disks $\calP_i(v_j)$ and $\calP_i(v_k)$ are tangent for every $i$, by the construction of $\bbV$.  Thus $\calP_i$ is eventually a circle packing realizing $X$.  Furthermore if $i$ is big enough then the disks of $\calP_i$ are sufficiently close to their partners in $\calP$ that the disks of $\calP_i$ must lie in the bounded curvilinear triangular region formed between $\calP(v_1), \calP(v_2), \calP(v_3)$.  Thus we get that eventually $\calP_i = \calP$ by the essential uniqueness of $\calP$, concluding the proof.
\end{proof}

\begin{proof}[Proof of Theorem \ref{main2}]
Let $X$ be a triangulation of the 2-torus, and let $\bbT$ be a flat torus and $\calP$ a circle packing in $\bbT$ realizing $X$, by the Discrete Uniformization Theorem \ref{dut}.  We wish to show that $\bbT$ is similar to the quotient of $\bbC$ by $\left<1,\tau\right>$ with $\tau$ an algebraic number.  Our proof will also show that then, supposing that $\bbT = \left<1,\tau\right>$, we have that the radii of the disks of $\calP$ are algebraic.  This proof uses ideas similar to those in the proof of Theorem \ref{main2} of Section \ref{tarski section}.

Suppose without loss of generality that $\bbT = \bbC / \left<1,\tau\right>$, and let $\tilde\calP$ be a $\left<1,\tau\right>$-periodic packing in $\bbC$, so that $\tilde\calP / \left<1,\tau\right> = \calP$.  Normalize $\tilde\calP$ so that, say, some distinguished disk $D_0$ of $\calP$ has a lift in $\tilde\calP$ centered at the origin.  In this setting it is not necessary that we apply enough normalizations to $\tilde\calP$ and $\tau$ that they are uniquely determined by our normalizing conditions.

Define a fundamental parallelogram $P\subset \bbC$ for the action by $\left<1,\tau\right>$ as in the proof of Theorem \ref{main2} in Section \ref{tarski section}, see p.\ \pageref{fundamental parallelogram}.  Let $V_P$ be the set of vertices of the contact graph of $\tilde\calP$ corresponding to circles having Euclidean center in $P$.

Enumerate the vertices $v_1,\ldots,v_n$ of $V_P$.  Fix variables $x_1,\ldots,x_n,y_1,\ldots,y_n,r_1,\ldots,r_n$, and two free variables $a$ and $b$, which will serve as the real and imaginary parts of $\tau$.  Let $\bbV\subset \bbR^{3n + 2}$ be the real algebraic variety in these variables defined by equations as follow:
\begin{itemize}
\item Every disk $D$ of $\tilde\calP$ may be written $D = \tilde\calP(v_i) + t + s\tau$, for some $v_i\in V_P$ and integers $s$ and $t$.  (If $D$ has center in $P$ then $t = s = 0$.)  Suppose that the distinct disks $D_1, D_2$ of $\tilde\calP$ meet.  Write $D_1 = \tilde\calP(v_i) + t_1 + s_1\tau$ and $D_2 = \tilde\calP(v_j) + t_2 + s_2\tau$.  Then add $\left[ (x_i + t_1 + s_1 a) - (x_j + t_2 + s_2 a) \right]^2 + \left[ (y_i + s_1 b) - (y_j + s_2 b) \right]^2 = (r_i + r_j)^2$ to the defining equations of $\bbV$.  Do this for every pair of distinct disks of $\tilde\calP$ which meet.
\item To encode our normalization from above, let $v_j\in V_P$ be the vertex of that disk of $\tilde\calP(V_P)$ which is a lift of our distinguished disk $D_0$.  Then add $x_j = 0, y_j = 0$ to the defining equations of $\bbV$.

\end{itemize}

Let $\bf v$ be the $3n+2$-tuple given by the radii, and coordinates of the centers, of the disks of $\tilde\calP(V_P)$, and by the real and imaginary parts of $\tau$, in the natural way.  Then clearly $\bf v\in \bbV$.  Furthermore, we argue, essentially as in the proof of Theorem \ref{main1} given earlier in this section that $\bf v$ is an isolated point of $\bbV$ in the subspace topology from $\bbR^n$, concluding the proof of Theorem \ref{main2} by Lemma \ref{mccaughan}.
\end{proof}

\begin{proof}[Proof of Theorem \ref{main3}]
Let $X$ be a triangulation of a genus $g$ surface for $g\ge 2$, let $R$ be a complete compact constant curvature $-1$ Riemann surface, and let $\calP$ be a circle packing in $R$ realizing $X$.  We wish to show that $R$ is isometric to $\bbH^2 / \Gamma$ for some $\Gamma \subset \PSL_2 (\bbR \cap \bar \bbQ) \subset \PSL_2 \bbR = \Isom^+ \bbH^2$.  Along the way we will also end up showing that the hyperbolic radius of any disk of $\calP$ is the logarithm of an algebraic number.

The proof proceeds essentially as did the proof of Theorem \ref{main2} given in this section.  First, let $\tilde\calP$ be a circle packing in $\bbH^2$, so that the quotient of $\bbH^2$ by some discrete group of isometries of $\bbH^2$ is $R$, and so that the image of $\tilde\calP$ under this quotient map is exactly $\calP$.  Pick once and for all a distinguished vertex $v_0$ of $X$, and a neighbor $u_0$ of $v_0$ in $X$, and normalize $\tilde\calP$ so that some lift $\tilde D_{v_0} \in \tilde\calP$ of $\calP(v_0)$ is centered at the origin, and so that the neighboring lift $\tilde D_{u_0}$ of $\calP(u_0)$ is centered along the positive real axis.  This fixes $\tilde\calP$ uniquely.

Let $\Gamma\subset \PSL_2\bbR$ be the group of isometries of $\bbH^2$ so that $\tilde\calP / \Gamma = \calP$.  Let $\gamma_1,\gamma_1\inv,\ldots,\gamma_g,\gamma_g\inv$ be a set of generators for $\Gamma$.  Fix $P\subset \bbH^2$ to be a fundamental domain for the action by $\Gamma$, so that $P$ contains both the origin, and the hyperbolic center of $\tilde D_{u_0}$.  We ask that no two points of $P$ are identified by the action by $\Gamma$, and that $P$ tiles $\bbH^2$ under the action of $\Gamma$.

Let $V_P$ be the set of vertices of the contact graph of $\tilde\calP$ corresponding to disks whose hyperbolic centers lie in $P$.  Enumerate the vertices $v_1,\ldots,v_n$ of $V_P$, and fix real variables $x_1,\ldots,x_n,y_1,\ldots,y_n,R_1,\ldots,R_n$.  Here the $(x_i,y_i)$ will represent the hyperbolic centers of the disks, in the Poincar\'e disk model of hyperbolic space, and the $R_i = e^{r_i}$ where the $r_i$ represent their hyperbolic radii.  Fix $8g$ additional real variables which will represent the matrix entries of the $2g$ matrices $\gamma_i, \gamma_i\inv$.  Let $\bbV\subset \bbR^{3n + 8g}$ be the real algebraic variety in all of these variables defined by equations as follows:
\begin{itemize}
\item Every disk $D$ of $\tilde\calP$ may be written $D = \alpha \tilde\calP(v_i)$ for some $v_i \in V_P$ and $\alpha \in \Gamma$.  In particular the action of $\gamma$ sends the hyperbolic centers of disks to hyperbolic centers.  Suppose that the distinct disks $D_1, D_2$ of $\tilde\calP$ meet.  We wish to express, via a polynomial equation in our variables from above, that the distance between the hyperbolic centers $z_1, z_2$ of $D_1$ and $D_2$ is exactly $r_i + r_j$.  To do this, recall that if $z_1,z_2 \in \bbD \cong \bbH^2$, then the hyperbolic distance between $z_1$ and $z_2$ may be expressed as $d(z_1,z_2) = \operatorname{arcosh}(1 + \delta(z_1,z_2))$, where:
\begin{align*}
\delta(z_1,z_2) & = 2\frac{|z_1-z_2|^2}{(1-|z_1|^2)(1-|z_2|^2)} \\
\operatorname{arcosh}(x) & = \ln (x + \sqrt{x+1}\sqrt{x-1})
\end{align*}
Therefore the expression $e^{d(z_1,z_2)} = e^{r_i + r_j} = R_iR_j$ may be simplified to a polynomial.  Furthermore $z_1 = \alpha_1(x_i + y_i \sqrt{-1})$, similarly for $z_2$.  Plugging these expressions for $z_1,z_2$ into our formula, we may again simplify to a polynomial expression in the $x_i, y_i, R_i$, and the variables representing the matrix entries of the $\gamma_i, \gamma_i\inv$.  Add the resulting polynomial to the defining equations of $\bbV$ for every such pair of distinct disks of $\tilde\calP$ which meet.
\item As before, encode our normalization on $\tilde\calP$ as follows: let $v_i$ be the vertex of $V_P$ corresponding to $\tilde D_{v_0}$, similarly $v_j$ for $\tilde D_{u_0}$.  Add the polynomials $x_i = y_i = y_j = 0$ to our defining equations for $\bbV$.
\end{itemize}
The conclusion of the proof proceeds as in the proof of Theorem \ref{main2} above.
\end{proof}

\begin{proof}[Proof sketch of Lemma \ref{mccaughan}]
Let $\bbV$ be a variety in $n$ variables defined over a field $k\subset \bbR$, and let ${\bf v} = (v_1,\ldots,v_n)$ be a point of $\bbV$ which is isolated in the subspace topology from $\bbR^n$ on $\bbV$.  We wish to show that then the $v_i$ are algebraic over $k$.  Note that if $\alpha$ is any field homomorphism $k(v_1,\ldots,v_n) \to \bbR$ which fixes $k$, then $\alpha({\bf v}) \in \bbV$.

Suppose for contradiction that some $v_i$ fails to be algebraic over $k$.  Then we may write $k(v_1,\ldots,v_n) = k(y_1,\ldots,y_m,z)$, where the $y_1,\ldots,y_m$ are independent transcendentals over $k$, and $z$ is algebraic over $k(y_1,\ldots,y_m)$.  In particular, every $v_i$ is equal to some rational function in the variables $y_1,\ldots,y_m,z$.

Let $p$ be the minimal polynomial for $z$ over $k(y_1,\ldots,y_m)$.  Then $p$ is irreducible over $k(y_1,\ldots,y_m)$, thus $\partial p/\partial z \ne 0$ at $(y_1,\ldots,y_m,z)$.  Then by the implicit function theorem, in a neighborhood of $(y_1,\ldots,y_m,z)$ we may write the $z$-coordinate of the vanishing set of $p$ as a smooth function of the $y_1,\ldots,y_m$-coordinates.

Therefore, choose $(\hat y_1,\ldots,\hat y_m)$ to be close to $(y_1,\ldots,y_m)$, so that $p(\hat y_1,\ldots,\hat y_m, \hat z) = 0$ for $\hat z$ close to but unequal to $z$.  We may ensure in our choice that the $\hat y_i$ are independent transcendentals over $k$.  Then $\alpha$ fixing $k$ and sending $y_i \mapsto \hat y_i$ and $z \mapsto \hat z$ is a field homomorphism from $k(y_1,\ldots,y_m,z) = k(v_1,\ldots,v_n)$ to $\bbR$, which sends $\bf v$ to another point of $\bbV$, and which can be arranged by our choices earlier in this paragraph to send $\bf v$ as close to itself as desired.  This contradicts the hypothesis that $\bf v$ is isolated in $\bbV$.
\end{proof}

\begin{bibdiv}
\begin{biblist}

\bib{MR510197}{book}{
   author={Ahlfors, Lars V.},
   title={Complex analysis},
   edition={3},
   note={An introduction to the theory of analytic functions of one complex
   variable;
   International Series in Pure and Applied Mathematics},
   publisher={McGraw-Hill Book Co.},
   place={New York},
   date={1978},
   pages={xi+331},
   isbn={0-07-000657-1},
   review={\MR{510197 (80c:30001)}},
}

\bib{MR0273510}{article}{
   author={Andreev, E. M.},
   title={Convex polyhedra of finite volume in Loba\v cevski\u\i\ space},
   language={Russian},
   journal={Mat. Sb. (N.S.)},
   volume={83 (125)},
   date={1970},
   pages={256--260},
   review={\MR{0273510 (42 \#8388)}},
}

\bib{MR860677}{article}{
   author={Brooks, Robert},
   title={Circle packings and co-compact extensions of Kleinian groups},
   journal={Invent. Math.},
   volume={86},
   date={1986},
   number={3},
   pages={461--469},
   issn={0020-9910},
   review={\MR{860677 (88b:32050)}},
   doi={10.1007/BF01389263},
}

\bib{MR1279064}{article}{
   author={Epstein, David B. A.},
   author={Petronio, Carlo},
   title={An exposition of Poincar\'e's polyhedron theorem},
   journal={Enseign. Math. (2)},
   volume={40},
   date={1994},
   number={1-2},
   pages={113--170},
   issn={0013-8584},
   review={\MR{1279064 (95f:57030)}},
}

\bib{MR1207210}{article}{
   author={He, Zheng-Xu},
   author={Schramm, Oded},
   title={Fixed points, Koebe uniformization and circle packings},
   journal={Ann. of Math. (2)},
   volume={137},
   date={1993},
   number={2},
   pages={369--406},
   issn={0003-486X},
   review={\MR{1207210 (96b:30015)}},
   doi={10.2307/2946541},
}

\bib{koebe-1936}{article}{
	author={Koebe, Paul},
	title={Kontaktprobleme der Konformen Abbildung},
	language={German},
	journal={Ber. Verh. S\"achs. Akad. Wiss. Leipzig},
	volume={88},
	date={1936},
	pages={141--164},
}

\bib{MR2258757}{article}{
   author={Kojima, Sadayoshi},
   author={Mizushima, Shigeru},
   author={Tan, Ser Peow},
   title={Circle packings on surfaces with projective structures: a survey},
   conference={
      title={Spaces of Kleinian groups},
   },
   book={
      series={London Math. Soc. Lecture Note Ser.},
      volume={329},
      publisher={Cambridge Univ. Press},
      place={Cambridge},
   },
   date={2006},
   pages={337--353},
   review={\MR{2258757 (2008a:52027)}},
}

\bib{MR0219380}{book}{
   author={Kreisel, G.},
   author={Krivine, J.-L.},
   title={Elements of mathematical logic. Model theory},
   series={Studies in Logic and the Foundations of Mathematics},
   publisher={North-Holland Publishing Co.},
   place={Amsterdam},
   date={1967},
   pages={xi+222},
   review={\MR{0219380 (36 \#2463)}},
}

\bib{mccaughan-thesis}{thesis}{
	author={McCaughan, Gareth},
	title={Some results on circle packings},
	date={1996},
	type={Ph.D.\ thesis},
	organization={University of Cambridge},
	note={At the time of writing, this was available online at \url{http://www.mccaughan.org.uk/g/personal/maths.html}.},
}

\bib{MR1316178}{article}{
   author={Nimershiem, Barbara E.},
   title={Isometry classes of flat $2$-tori appearing as cusps of hyperbolic
   $3$-manifolds are dense in the moduli space of the torus},
   conference={
      title={Low-dimensional topology},
      address={Knoxville, TN},
      date={1992},
   },
   book={
      series={Conf. Proc. Lecture Notes Geom. Topology, III},
      publisher={Int. Press, Cambridge, MA},
   },
   date={1994},
   pages={133--142},
   review={\MR{1316178 (95k:57018)}},
}

\bib{MR0507234}{book}{
   author={Raghunathan, M. S.},
   title={Discrete subgroups of Lie groups},
   note={Ergebnisse der Mathematik und ihrer Grenzgebiete, Band 68},
   publisher={Springer-Verlag},
   place={New York},
   date={1972},
   pages={ix+227},
   review={\MR{0507234 (58 \#22394a)}},
}

\bib{MR2249478}{book}{
   author={Ratcliffe, John G.},
   title={Foundations of hyperbolic manifolds},
   series={Graduate Texts in Mathematics},
   volume={149},
   edition={2},
   publisher={Springer},
   place={New York},
   date={2006},
   pages={xii+779},
   isbn={978-0387-33197-3},
   isbn={0-387-33197-2},
   review={\MR{2249478 (2007d:57029)}},
}

\bib{MR906396}{article}{
   author={Rodin, Burt},
   author={Sullivan, Dennis},
   title={The convergence of circle packings to the Riemann mapping},
   journal={J. Differential Geom.},
   volume={26},
   date={1987},
   number={2},
   pages={349--360},
   issn={0022-040X},
   review={\MR{906396 (90c:30007)}},
}

\bib{MR2884870}{article}{
   author={Rohde, Steffen},
   title={Oded Schramm: from circle packing to SLE},
   journal={Ann. Probab.},
   volume={39},
   date={2011},
   number={5},
   pages={1621--1667},
   issn={0091-1798},
   review={\MR{2884870}},
   doi={10.1007/978-1-4419-9675-6\_1},
}

\bib{MR1303402}{article}{
   author={Sachs, Horst},
   title={Coin graphs, polyhedra, and conformal mapping},
   note={Algebraic and topological methods in graph theory (Lake Bled, 1991)},
   journal={Discrete Math.},
   volume={134},
   date={1994},
   number={1-3},
   pages={133--138},
   issn={0012-365X},
   review={\MR{1303402 (95j:52020)}},
   doi={10.1016/0012-365X(93)E0068-F},
}

\bib{MR2131318}{book}{
   author={Stephenson, Kenneth},
   title={Introduction to circle packing: the theory of discrete analytic functions},
   publisher={Cambridge University Press},
   place={Cambridge},
   date={2005},
   pages={xii+356},
   isbn={978-0-521-82356-2},
   isbn={0-521-82356-0},
   review={\MR{2131318 (2006a:52022)}},
}

\bib{swan-tarski}{misc}{
	author={Swan, Richard},
	title={Tarski's Principle and the elimination of quantifiers},
	status={unpublished},
	date={2005},
	note={At the time of writing, this was available online at \url{http://www.math.uchicago.edu/~swan/expo/Tarski.pdf}.},
}

\bib{thurston-gt3m-notes}{misc}{
	author={Thurston, William},
	title={The Geometry and Topology of Three-Manifolds},
	organization={Princeton University},
	status={unpublished lecture notes, version 1.1},
	year={1980},
	note={At the time of writing, these notes were available online at \url{http://library.msri.org/books/gt3m/}.},
}

\bib{MR2001072}{article}{
   author={Williams, G. Brock},
   title={Noncompact surfaces are packable},
   journal={J. Anal. Math.},
   volume={90},
   date={2003},
   pages={243--255},
   issn={0021-7670},
   review={\MR{2001072 (2004h:30055)}},
   doi={10.1007/BF02786558},
}

\end{biblist}
\end{bibdiv}

\end{document}